\documentclass[11pt,twoside, leqno]{article}

\usepackage{mathrsfs}
\usepackage{amssymb}
\usepackage{amsmath}
\usepackage{amsthm}
\usepackage{amsfonts}
\usepackage[colorlinks=true, pdfstartview=FitV, linkcolor=blue, citecolor=blue, urlcolor=blue]{hyperref}
\usepackage{color}

\pagestyle{myheadings}\markboth{\footnotesize\rm\sc Cheng Chen, Guixiang Hong and Wenhua Wang}
{\footnotesize\rm\sc Matrix-Valued Harmonic Function Spaces}

\allowdisplaybreaks

\textwidth=15cm
\textheight=21cm
\oddsidemargin 0.35cm
\evensidemargin 0.35cm

\parindent=13pt

\def\rr{{\mathbb R}}
\def\rn{{{\rr}^n}}
\def\zz{{\mathbb Z}}

\def\nn{{\mathbb N}}
\def\fz{\infty}

\def\cm{{\mathcal M}}

\def\ls{\lesssim}

\def\r{\right}
\def\lf{\left}

\def\bint{{\ifinner\rlap{\bf\kern.30em--}
\int\else\rlap{\bf\kern.35em--}\int\fi}\ignorespaces}

\def\sbint{{\ifinner\rlap{\bf\kern.32em--}
\hspace{0.078cm}\int\else\rlap{\bf\kern.45em--}\int\fi}\ignorespaces}

\newtheorem{theorem}{Theorem}[section]
\newtheorem{lemma}[theorem]{Lemma}
\newtheorem{corollary}[theorem]{Corollary}

\theoremstyle{definition}
\newtheorem{remark}[theorem]{Remark}
\newtheorem{definition}[theorem]{Definition}
\numberwithin{equation}{section}

\numberwithin{equation}{section}

%%%%%%%%%%%%%%%%%%%%%%%%%%%%%%%%%%%%%%%%%%%%%%%%%%%%%%%%%%%%%%%%%%%%%
%%%%%%%%%%%%%%%%%%%%%%%%%%%%%%%%%%%%%%%%%%%%%%%%%%%%%%%%%%%%%%%%%%%%%%

%\newtheorem{theorem}{Theorem}[chapter]
%\newtheorem{lemma}[theorem]{Lemma}

\numberwithin{equation}{section}

\begin{document}

\arraycolsep=1pt

\title{\Large\bf A Representation of Matrix-Valued Harmonic Functions by the Poisson Integral of Non-commutative BMO Functions
 %An Equivalent Characterization of Operator-Valued Harmonic Function Spaces
 \footnotetext{\hspace{-0.35cm} {\it 2010 Mathematics Subject Classification}.
{Primary 46L52; Secondary 42B30, 42B35, 42C40.}
\endgraf{\it Key words and phrases.}  Non-commutative $L_p$-space, BMO space, Hardy space, Carleson measure, Dirichlet problem.
\endgraf G. Hong was supported by National Natural Science Foundation of China (No. 12071355, No. 12325105) and Fundamental Research Funds for the Central Universities (No. 2042022kf1185). W. Wang was supported by
 China Postdoctoral Science Foundation (No. 2024M754159),  Postdoctoral Fellowship Program of CPSF (No. GZB20230961) and
  Heilongjiang Provincial Postdoctoral Science Foundation (No. LBH-Z23174).
 C, Chen was supported by Fundamental Research Funds for the Central Universities, Sun Yat-Sen University (No. 23qnpy50).
%\endgraf $^\ast$\,Corresponding author
}}
\author{Cheng Chen, Guixiang Hong and Wenhua Wang}
\date{  }
\maketitle

\vspace{-0.8cm}

\begin{center}
\begin{minipage}{13cm}\small
{\noindent{\bf Abstract} \
  In this paper, the authors study
the matrix-valued harmonic functions
and characterize them by the Poisson integral of functions in non-commutative
 BMO (bounded mean oscillation) spaces. This provides a very satisfactory non-commutative analogue of the beautiful result due to
Fabes, Johnson and Neri [Indiana Univ. Math. J. $\mathbf{25}$ (1976) 159-170; MR0394172].
}
\end{minipage}
\end{center}

\section{Introduction}
\hskip\parindent
In the past few years, the study of harmonic extension of a function
 is one of the essential topic in Harmonic Analysis
and {PDE}s. That is, given a domain $\Omega \subseteq \mathbb{R}^{n+1}$ and a function $f$   on the boundary $\partial \Omega$,  we set out to find a function $u$ which is harmonic on $\Omega$ but continuous on $\partial \Omega$ such that the restriction of $u$ to $\partial \Omega$ coincides with $f$. A typical case is that $\Omega=\mathbb{R}^{n+1}_{+}$ and $\partial \Omega \cong \mathbb{R}^{n}$. In the course of showing the duality of classical Hardy and BMO spaces,
Fefferman and Stein \cite{fs72} proved that
the harmonic extension $u(x,\,t) = e^{-t\sqrt{-\Delta}}(f)(x)$ satisfies the following
Carleson condition:
\begin{align}\label{e1.1}
\sup _{y\in\rn,\,r>0}\lf(\frac{1}{\lf|B(y,\,r)\r|}
\int_{B(y,\,r)}\int_{0}^{r}\lf|t\nabla u(x,\,t)\r|^2\,\frac{dtdx}{t}\r)^{1/2}<\infty,
\end{align}
if and only if $f$ is a BMO function,
where $\nabla=(\frac{\partial}{\partial{x_1}},\,\ldots,\,
 \frac{\partial}{\partial{x_n}},\,\frac{\partial}{\partial{t}})$ and
  $\Delta=\sum_{i=1}^n\frac{\partial^2}{\partial x_i^2}+\frac{\partial^2}{\partial t^2}$.
Strengthening this result, Fabes, Johnson and Neri \cite{fjn75} showed that condition \eqref{e1.1} actually characterizes all the harmonic functions on $\rr^{n+1}_+$ whose traces are in $\mathrm{BMO}(\rn)$.
 The study of this topic has been widely extended
to different settings, for instance, for different equations or systems such as degenerate elliptic equations and systems, elliptic equations and
systems with complex coefficients, Schr\"{o}dinger equations, and for domains other than $\rn$ such as Lipschitz domains (see e.g. \cite{dkp11,dyz14,fn75,fn80,g14,jl22}).

On the other hand, matrix-valued harmonic analysis on $\rn$ plays an important role in many areas of mathematics. For example, it has applications in the prediction theory and rational approximation (see e.g. \cite{ds23,p00}).
More generally, operator-valued harmonic analysis has been developed rapidly in recent years. For instance, motivated by the theory of non-commutative martingale inequalities
 (see e.g. \cite{j02,jx03,jx07,jx08,lp91,px97}) and the Littlewood-Paley-Stein theory of quantum Markov semigroups (see e.g. \cite{jlx06,jm10,jm12}), in 2006, Mei \cite{m07} studied systematically the theory of non-commutative Hardy spaces and BMO spaces on the Euclidean space $\rn$. In particular, he provided a quite satisfactory non-commutative analogue of the famous Fefferman-Stein duality between Hardy space and BMO space (see \cite[Lemma 1.4 and Corollary 2.6]{m07}), which solved an open question in matrix-valued harmonic analysis arising from prediction theory.
For more information about non-commutative Hardy spaces and BMO spaces, we refer the readers to \cite{hlm20,hww22,hx21,hy13,w23,xx18,xx19,xxx16}.

Inspired by these results,
 it is natural and interesting to ask whether there holds a non-commutative version of the beautiful result of Fabes {\it et al.} \cite{fjn75}. To be more precise, whether the Carleson condition \eqref{e1.1} can characterize all matrix-valued harmonic functions
$u(x,\,t)$ on $\rr^{n+1}_+$ with boundary value in non-commutative BMO spaces?

{In this paper we shall give an affirmative answer.
To be more precise, let $B(\ell_2)$ stand for the matrix algebra of bounded
linear operators on $\ell_2$. We consider the algebra formed by essentially
bounded functions $f:\rn\rightarrow B(\ell_2)$.
We prove that matrix-valued harmonic function $u:\rr_+^{n+1}\rightarrow B(\ell_2)$ satisfies a
Carleson condition similar to \eqref{e1.1} (will be stated explicitly later)
if and only if $u$ is the Poisson integral of some function in non-commutative BMO spaces.}

This result also holds for general operator-valued harmonic function, that is, replacing the
matrix algebra $B(\ell_2)$ by an arbitrary semifinite von Neumann algebra $\cm$.  In what follows, we will prove our result in this general framework.

This paper is organized as follows.

{In Section \ref{s2}, we introduce the notions of being harmonic with respect to different topologies,
and then show the equivalence of these {\it a priori} different notions. For more details, we refer the readers to  Theorem \ref{t4.1} below.
There needs to be emphasized that, in the process of proving our main result---Theorem \ref{t3.1}, this equivalence plays a crucial role for the desired
estimates.}

In Section \ref{s3}, we first recall some preliminaries
concerning the non-commutative $L_p$-spaces as well as the spaces of the operator-valued harmonic functions satisfying the Carleson measure condition. Then we prove the main result, that is, characterize the operator-valued harmonic functions by the Poisson integral of non-commutative
 BMO functions. It is worth to be mentioned that, due to the noncommutativity, some methods in
the proof of \cite[Theorem 1]{fjn75}  do not apply directly in the present setting.
To overcome this difficulty, we utilize the equivalence of the different notions of being harmonic with respect to the different topologies.

Finally, we make some conventions on notations.

Set $\nn:=\{1,\, 2,\,\ldots\}$ and $\zz_+:=\{0\}\cup\nn$.
Throughout the whole paper, we denote by $C$ a positive
constant which is independent of the main parameters, but it may
vary from line to line. We also use $C_{(\alpha,\,\beta,\,\ldots)}$ to denote a positive constant
depending on the indicated parameters $\alpha$, $\beta$, $\cdots$.  For any $x\in\rn$, $r\in(0,\,\infty)$, let $B(x,\,r):=\{y\in\rn:|x-y|<r\}$.
The symbol $D\ls F$ means that $D\le
CF$ for some constant $C\in \mathbb{R}_{+}$.  If $D\ls F$ and $F\ls D$, we then write $D\sim F$.
For a set $E \subset\rn$, let $E^\complement:=\rn\setminus E$ and we denote by $\chi_E$ its characteristic
function. Let $C^2(\rr_+^{n+1})$ denote the set of
 all the twice continuously differentiable functions on $\rr_+^{n+1}$.
 For a Banach space $X$, let ${X}_*$ denote a {predual space} of $X$ and ${X}^*$ be the {dual space} of ${X}$.

%%%%%%%%%%%%%%%%%%%%%%%%%%%%%%%%%%%%%%%%%%%%%%%%%%%%%%%%%%%%%%%%%%

%%%%%%%%%%%%%%%%%%%%%% section 2 %%%%%%%%%%%%%%%%%%%%%%%%%%%%%%%%%%

%%%%%%%%%%%%%%%%%%%%%%%%%%%%%%%%%%%%%%%%%%%%%%%%%%%%%%%%%%%%%%%%%%%%%
\section{Operator-valued harmonic functions \label{s2}}
\bigskip

In this section, we introduce the different definitions of being harmonic of an operator-valued function, and then show all of them are equivalent.

Let $C^2(\rr_+^{n+1},{X})$ denote the set of all the twice continuously differentiable functions from $\rr_+^{n+1}$ to a Banach space ${X}$, where the derivatives are taken with respect to the norm. Without causing any confusion, for $f\in C^2(\rr_+^{n+1},{X})$, we still use  $\Delta f=\sum_{i=1}^n\frac{\partial^2f}{\partial x_i^2}+\frac{\partial^2f}{\partial t^2}$ to denote the associated Laplacian operator. For the vector-valued functions, unlike in the scalar-valued case, the notion of being harmonic varies when the underlying Banach
space is endowed with different topologies.

Let $\cm$ be a semifinite von Neumann algebra, and $H$ denote a Hilbert space on which $\cm$ acts. The standard inner product on $H$ will be denoted by $\langle \cdot,\,\cdot\rangle_H$. The dual (resp. predual) space of $\mathcal M$ is denoted by $\cm^*$ (resp. $\cm_*$). The pairing of $\cm$ and $\cm_*$ will be denoted by $\langle\cdot,\,\cdot\rangle$.

\begin{definition}\label{d2.1}
 Let $u:\rr_+^{n+1}\rightarrow \cm$ be an operator-valued function.
We say
\begin{enumerate}
\item[\rm{(i)}]$u$ is {\it strongly harmonic} if $u\in C^2(\rr_+^{n+1},\,\cm)$ and
$\Delta u=0$;
\item[\rm{(ii)}] $u$ is {\it weakly harmonic} if $\varphi(u)$ is harmonic for any $\varphi\in\cm^*$;
 \item[\rm{(iii)}] $u$ is {\it weak-$^*$ harmonic}, if $\langle u,\,a\rangle$ is harmonic for any $a\in \cm_*$;
  \item[\rm{(iv)}]$u$ is {\it wo-harmonic} if $\langle \xi,\,u\eta\rangle_H$ is harmonic for any $\xi, \eta\in H$;
 \item[\rm{(v)}]$u$ is {\it so-harmonic} if $ u\xi \in C^2(\rr_+^{n+1},\,H)$ and $\Delta (u\xi)=0$ for any $\xi\in H$;
 \item[\rm{(vi)}] $u$ is {\it $\sigma$-so-harmonic} if $\{u\xi_j\}_j \in C^2(\rr_+^{n+1},\,\ell_2(H))$ and $\Delta (u\{\xi_j\}_j)=0$ for any $\{\xi_j\}_j\in \ell_2(H)$;
 \item[\rm{(vii)}] $u$ is {\it $*$-so-harmonic} if both $u$ and $u^*$ are so-harmonic;
 \item[\rm{(viii)}] $u$ is {\it $\sigma$-$*$-so-harmonic} if both $u$ and $u^*$ are $\sigma$-so-harmonic.
\end{enumerate}
\end{definition}

In the following theorem, we show the equivalence of being harmonic with respect to the different topologies.
\begin{theorem}\label{t4.1}
Let $u:\rr_+^{n+1}\rightarrow\cm$ be an operator-valued function.
Then the following statements are equivalent:
\begin{enumerate}
\item[\rm{(i)}]$u$ is strongly harmonic;
\item[\rm{(ii)}] $u$ is weakly harmonic;
 \item[\rm{(iii)}]$u$ is weak-$^*$ harmonic;
 \item[\rm{(iv)}]$u$ is wo-harmonic;
 \item[\rm{(v)}]$u$ is so-harmonic;
 \item[\rm{(vi)}]$u$ is $\sigma$-so-harmonic;
 \item[\rm{(vii)}] $u$ is $*$-so-harmonic;
 \item[\rm{(viii)}] $u$ is $\sigma$-$*$-so-harmonic.
\end{enumerate}
\end{theorem}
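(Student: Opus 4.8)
The plan is to prove the chain of implications in a cyclic or near-cyclic manner, exploiting that the strongest notion (i) trivially implies every weaker one, and that each weak notion can be "bootstrapped" back to (i) by combining a weak-type harmonicity (giving the mean value property or $\Delta=0$ in a distributional/pointwise-on-vectors sense) with a regularity argument that upgrades pointwise or weak differentiability to norm differentiability. Concretely, I would first observe the easy implications: (i)$\Rightarrow$(ii), (i)$\Rightarrow$(iii), (i)$\Rightarrow$(v), and (v)$\Rightarrow$(iv) (since $\langle\xi,u\eta\rangle_H$ is a scalar obtained by composing the $H$-valued harmonic map $u\eta$ with the bounded functional $\langle\xi,\cdot\rangle_H$), and likewise (vi)$\Rightarrow$(v) (restrict to a single vector), (vii)$\Rightarrow$(v), (viii)$\Rightarrow$(vii) and (viii)$\Rightarrow$(vi). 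The substantive direction is to close the loop, e.g. (iv)$\Rightarrow$(i) or (ii)$\Rightarrow$(i) or (iii)$\Rightarrow$(i), and then to get the $*$- and $\sigma$-versions (vii), (viii) for free from (i) once we know $u$ strongly harmonic forces $u^*$ and the $\ell_2(H)$-amplifications to be strongly harmonic as well (the latter because $\{u\xi_j\}$ has norm-convergent derivatives dominated by $\|u^{(\cdot)}\|\,\|\{\xi_j\}\|_{\ell_2(H)}$, using that $u$, being strongly harmonic and locally bounded, has locally bounded derivatives by interior estimates for harmonic functions).

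For the key upgrade, the cleanest route is via the Poisson/mean-value property. If $u$ is wo-harmonic, then for every $\xi,\eta\in H$ the scalar function $x\mapsto\langle\xi,u(x)\eta\rangle_H$ is harmonic, hence satisfies the mean value property over balls $\overline{B((x_0,t_0),r)}\subset\rr_+^{n+1}$. Since $u$ is (by hypothesis a function into $\cm$, hence) pointwise bounded in operator norm, one shows $u$ is locally bounded in norm: first weak-operator measurability plus pointwise harmonicity of all matrix coefficients gives, via the mean value identity and the uniform boundedness principle applied to the family of averages, local norm-boundedness; then the mean value property upgrades in the weak operator topology to the operator identity $u(x_0,t_0)=\fint_{B((x_0,t_0),r)}u\,dV$, the integral existing in the weak-$^*$ (or Bochner, on the locally bounded set) sense. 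From the mean value property over all small balls one recovers that $u$ is $C^\infty$ in norm with $\Delta u=0$: differentiating the Poisson-kernel representation $u(x,t)=\int P_t(x-y)u(y,t_0)\,dy$ on a slightly smaller half-space, or using that convolution of a locally bounded $\cm$-valued function with a smooth compactly supported mollifier is norm-smooth and the mean value property forces $u$ to equal its mollifications, yields $u\in C^2(\rr_+^{n+1},\cm)$ and $\Delta u=0$. This gives (iv)$\Rightarrow$(i); the implications (ii)$\Rightarrow$(iv) and (iii)$\Rightarrow$(iv) are immediate because vector functionals $\langle\xi,\cdot\,\eta\rangle_H$ belong to $\cm^*$ and, when $\cm$ acts in standard form, also to $\cm_*$ (or one argues (ii)$\Rightarrow$(i) and (iii)$\Rightarrow$(i) directly by the same mollification argument, since harmonicity of $\varphi(u)$ for all $\varphi$ in a norming subspace of $\cm^*$ already yields the mean value property tested against that subspace, which suffices to identify the Bochner average).

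The main obstacle I anticipate is the measurability/integrability bookkeeping needed to make "$\fint_{B}u\,dV$" meaningful and to pass from coefficientwise mean value identities to the norm-valued one: a priori $u$ is only assumed to be a function, not Bochner measurable, so one must extract weak-operator (equivalently weak-$^*$) measurability from wo-harmonicity, then local norm-boundedness from a Baire-category / uniform-boundedness argument applied to the harmonic coefficients, and only then is $u$ locally Bochner integrable (a bounded weak-$^*$ measurable function into a dual space, or one restricts to the separable von Neumann subalgebra generated by the range on a countable dense set of points and times). Once local Bochner integrability in norm is secured on each compact subset, the mollification argument is routine. A secondary subtlety is the precise relationship between $\cm^*$, $\cm_*$ and the vector functionals $\omega_{\xi,\eta}$ — I would handle this by noting $\omega_{\xi,\eta}\in\cm_*$ always, so (iii)$\Rightarrow$(iv) is automatic, while (ii)$\Rightarrow$(iv) uses only the inclusion of normal states among all functionals; thus the genuinely weakest hypothesis among (ii),(iii),(iv) is (iv), and it already suffices, making the whole equivalence pivot on the single implication (iv)$\Rightarrow$(i).
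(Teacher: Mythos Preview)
Your proposal is correct and follows the same overall strategy as the paper: trivial implications from (i) downward, local norm-boundedness via iterated Banach--Steinhaus, and then a bootstrap from an integral representation back to norm regularity. The paper organizes the nontrivial upgrade as the chain (iv)$\Rightarrow$(iii)$\Rightarrow$(ii)$\Rightarrow$(i): for (iv)$\Rightarrow$(iii) it writes every $a\in\cm_*$ as $\sum_j\xi_j\otimes\eta_j$ with $\sum_j\|\xi_j\|\|\eta_j\|<\infty$ and shows the series $\sum_j\langle\xi_j,u(\cdot)\eta_j\rangle_H$ converges locally uniformly (hence is harmonic); for (iii)$\Rightarrow$(ii) it approximates an arbitrary $\varphi\in\cm^*$ by $\varphi_j\in\cm_*$ and passes to the limit in the Poisson formula via equicontinuity and Ascoli; and for (ii)$\Rightarrow$(i) it proves a dedicated lemma that weak continuity forces local Bochner integrability (using that $u(\rr_+^{n+1})$ lies in the norm-closed span of $u(D)$ for a countable dense $D$), and then expands the Poisson kernel on a ball as a power series to read off norm-analyticity of $u$. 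Your more direct route (iv)$\Rightarrow$(i) via mollification is a legitimate variant: once local norm-boundedness is in hand, the convolution $u\ast\rho_\varepsilon$ exists as a weak-$^*$ (Gelfand) integral in $\cm$, is norm-$C^\infty$ because derivatives fall on $\rho_\varepsilon$, and coincides with $u$ since every coefficient $\langle\xi,u(\cdot)\eta\rangle_H$ has the mean value property; this sidesteps both the Bochner-integrability lemma and the (iii)$\Rightarrow$(ii) approximation step. The one point to make precise is exactly the obstacle you flag, namely that the Gelfand integral $u\ast\rho_\varepsilon$ really lands in $\cm$ rather than merely in $(\cm_*)^*$; this follows from local boundedness of $u$ together with weak-$^*$ closedness of norm balls in $\cm$.
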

To prove Theorem \ref{t4.1}, we need some technical lemmas. The first one is the Hille theorem, see e.g. \cite{du77}.
\begin{lemma}\label{l2.tt}
Let $(\Omega,\,d\mu)$ be a measure space, $X$, $Y$ be two Banach spaces,  $T:X\rightarrow Y$ be a closed linear operator defined inside $X$ and having values in $Y$. If $f:\Omega\rightarrow X$ and $Tf$ are Bochner integrable with respect to $\mu$, then we have
$$T\left(\int_E f(x)\,d\mu(x) \right)=\int_E T(f(x))\,d\mu(x),$$
for any measurable set $E\subseteq\Omega$.
\end{lemma}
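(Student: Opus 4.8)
The plan is to repackage the pair $(f,\,Tf)$ as a single Bochner integrable function with values in the graph of $T$, and then use that the graph is closed together with the (elementary) fact that bounded operators commute with Bochner integrals. Introduce the graph $G(T):=\{(x,\,Tx):x\in D(T)\}\subseteq X\times Y$, equipped with the norm $\|(x,\,y)\|:=\|x\|_X+\|y\|_Y$. Since $T$ is closed, $G(T)$ is a closed subspace of the Banach space $X\times Y$, hence itself a Banach space. Because $Tf$ is defined $\mu$-a.e. we have $f(\omega)\in D(T)$ for $\mu$-a.e. $\omega$, so we may define $g:\Omega\to G(T)$ by $g(\omega):=(f(\omega),\,Tf(\omega))$.

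Next I would check that $g$ is Bochner integrable as a $G(T)$-valued function. For strong measurability: $f$ and $Tf$ are strongly measurable by hypothesis, so $g$ is strongly measurable as a map into $X\times Y$; it is therefore essentially separably valued in $X\times Y$, and since $g$ takes values in $G(T)$ a.e., it is essentially separably valued in $G(T)$, so by the Pettis measurability theorem $g$ is strongly measurable into $G(T)$. For integrability, $\int_\Omega\|g\|\,d\mu=\int_\Omega(\|f\|_X+\|Tf\|_Y)\,d\mu<\infty$ by hypothesis. Hence for every measurable $E\subseteq\Omega$ the Bochner integral $\int_E g\,d\mu$ exists as an element of $G(T)$ (the integral is the norm-limit of integrals of $G(T)$-valued simple functions, which lie in $G(T)$, and $G(T)$ is closed); write $\int_E g\,d\mu=(x_0,\,Tx_0)$ for some $x_0\in D(T)$.

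Finally, the coordinate projections $\pi_X:X\times Y\to X$ and $\pi_Y:X\times Y\to Y$ are bounded linear maps, and bounded operators commute with the Bochner integral (immediate from the definition via simple functions and passage to the norm-limit). Therefore
$$x_0=\pi_X\lf(\int_E g\,d\mu\r)=\int_E\pi_X(g)\,d\mu=\int_E f\,d\mu,\qquad Tx_0=\pi_Y\lf(\int_E g\,d\mu\r)=\int_E\pi_Y(g)\,d\mu=\int_E Tf\,d\mu.$$
Combining these identities gives $\int_E f\,d\mu=x_0\in D(T)$ and $T\lf(\int_E f\,d\mu\r)=Tx_0=\int_E Tf\,d\mu$, which is exactly the assertion.

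The step I expect to be the main obstacle is conceptual rather than computational: since $T$ is only assumed closed, one cannot approximate $f$ by simple functions $f_n\to f$ in $L^1(\Omega;X)$ and pass $T$ through the limit, as there is no control on $Tf_n$; the graph device is precisely what converts the closedness of $T$ into the completeness of $G(T)$, reducing everything to the trivial bounded-operator case. The only other point needing care is the justification that the $G(T)$-valued Bochner integral, viewed inside $X\times Y$, really lies in $G(T)$, which is handled by the closedness of $G(T)$ as noted above.
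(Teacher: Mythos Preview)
Your proof is correct and is in fact the standard argument for Hille's theorem (this is essentially the proof given in Diestel--Uhl, \textit{Vector Measures}, Theorem~II.2.6). The paper itself does not prove this lemma; it simply states it and cites \cite{du77}. So there is nothing to compare: you have supplied a complete and correct proof where the paper only gives a reference, and your graph-of-$T$ reduction to the bounded case is precisely the textbook route.
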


\begin{lemma}\label{l4.1}
If $u: \mathbb{R}_+^{n+1}\rightarrow\cm$ is weakly continuous, then $u$ is Bochner integrable over any compact subset $K\subseteq \mathbb{R}_+^{n+1}$.
\end{lemma}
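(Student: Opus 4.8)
The plan is to deduce Bochner integrability on a compact set from the classical Pettis measurability theorem, using the uniform boundedness principle and Mazur's theorem as the two analytic inputs. Throughout, ``weakly continuous'' is read in the sense consistent with Definition \ref{d2.1}(ii), i.e. $\varphi\circ u:\rr_+^{n+1}\to\ccc$ is continuous for every $\varphi\in\cm^*$. Fix a compact set $K\subseteq\rr_+^{n+1}$. By the Pettis measurability theorem (see e.g. \cite{du77}), a function $K\to\cm$ is Bochner integrable (with respect to Lebesgue measure on $K$) provided it is (a) weakly measurable, (b) essentially separably valued, and (c) has integrable norm over $K$. Property (a) is automatic, since each $\varphi\circ u$ is continuous, hence Borel measurable.

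For (c), and as a tool for what follows, I would first show that $u$ is bounded on $K$. For each fixed $\varphi\in\cm^*$ the scalar function $x\mapsto\varphi(u(x))$ is continuous on the compact set $K$, hence bounded there. Regarding each $u(x)$, $x\in K$, as an element of $\cm^{**}=(\cm^*)^*$ via the canonical isometric embedding, this says exactly that the family $\{u(x):x\in K\}\subseteq\cm^{**}$ is pointwise bounded on the Banach space $\cm^*$; by the uniform boundedness principle,
$$M:=\sup_{x\in K}\|u(x)\|_{\cm}=\sup_{x\in K}\|u(x)\|_{\cm^{**}}<\fz .$$
(Equivalently: $u(K)$ is weakly compact, hence weakly bounded, hence norm bounded.) Since $K$ is bounded, $|K|<\fz$, and therefore $\int_K\|u(x)\|_{\cm}\,dx\le M\,|K|<\fz$, which is (c).

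The substantive point is (b). As $K$ is a compact metric space it is separable; fix a countable dense set $D\subseteq K$. Since $u\colon K\to(\cm,\sigma(\cm,\cm^*))$ is continuous and $D$ is dense in $K$, the countable set $\{u(d):d\in D\}$ is dense in $u(K)$ for the weak topology of $\cm$. Let $Z$ denote the norm-closed linear span of $\{u(d):d\in D\}$ in $\cm$; then $Z$ is norm separable, and being norm closed and convex it is weakly closed by Mazur's theorem. As $Z$ is weakly closed and contains a weakly dense subset of $u(K)$, we get $u(K)\subseteq Z$, so $u|_K$ is separably valued, proving (b). Combining (a) and (b) with the Pettis measurability theorem gives strong measurability of $u|_K$, and together with (c) we conclude that $u$ is Bochner integrable over $K$. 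The only genuine obstacle is step (b): weak continuity does not yield norm continuity, so $u(K)$ need not lie in the norm closure of $\{u(d):d\in D\}$, and a weakly dense countable set need not be norm dense; the passage to the norm-closed linear span followed by Mazur's theorem is precisely what upgrades weak separability of $u(K)$ to the norm separability required by Pettis.
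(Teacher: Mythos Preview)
Your proof is correct and follows essentially the same approach as the paper: both use Mazur's theorem (norm-closed convex sets are weakly closed) to trap $u(K)$ inside a norm-separable subspace, and the Banach--Steinhaus theorem to get the norm bound on compacta. The only difference is presentational---the paper carries out by hand the construction of approximating simple functions that you obtain in one stroke by invoking the Pettis measurability theorem from \cite{du77}.
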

To prove this, we borrow some ideas from \cite{a16}.
\begin{proof}
Firstly, we show that $\|u(\cdot,\,\cdot)\|:\mathbb{R}_+^{n+1}\rightarrow\mathbb{R}_+$ is measurable. Let $D\subseteq \mathbb{R}_+^{n+1}$ be a countable dense subset. Then $u(\mathbb{R}_+^{n+1})$ is contained in the weak closure of $u(D)$. By the  Hahn-Banach theorem, the norm closure and weak closure of any convex subsets of $\cm$ coincide. Therefore, $$u(\mathbb{R}_+^{n+1})\subseteq \overline{u(D)}^{w}\subseteq \cm_{0}:=\overline{\text{span}\{u(D)\}}^{\|\cdot\|}.$$
Let $\{a_{j}\}_{j}\subseteq \cm_{0}\setminus\{\mathbf{0}\}$ be dense. Choose the sequence $\{\varphi_{j}\}_{j}\subseteq \cm^{*}$ such that
$\|\varphi_{j}\|=\varphi_{j}(a_{j}/\|a_{j}\|)=1,$
 then $\|u(x,\,t)\|=\text{sup}_{j}|\varphi_{j}(u(x,\,t))|$ for any $(x,\,t)\in \mathbb{R}_+^{n+1}$. Indeed, for any $j\in\nn$,
 $$
 |\varphi_{j}(u(x,\,t))|\leq \|\varphi_{j}\|\|u(x,\,t)\|= \|u(x,\,t)\|.
 $$
 For any $\epsilon>0$, there exists $j\in\nn$ such that $$\|u(x,\,t)-a_{j}\|<\epsilon/2.$$
In this case, $$|\varphi_{j}(u(x,\,t))|\geq |\varphi_{j}(a_{j})|-\epsilon/2=\|a_{j}\|-\epsilon/2\geq \|u(x,\,t)\|-\epsilon.$$ Thus, by the assumption that $\varphi_{j}(u)$ is continuous for any $j\in\nn$, we know that $\|u(\cdot,\,\cdot)\|$ is measurable on $\mathbb{R}_+^{n+1}$.

In the next, we show that $u$ is Bochner measurable. Note that $\|u(\cdot,\,\cdot)-a_{j}\|$ is measurable for any $j\in\nn$. We define
$$E_{j}^{m}:=\lf\{(x,\,t)\in \mathbb{R}_+^{n+1}: \|u(x,\,t)-a_{j}\|<1/m\r\},$$
then $E_{j}^{m}$ is measurable, and $\cup_{j\geq 1}E_{j}^{m}=\mathbb{R}_+^{n+1}$ for all $m$. Define $g_{m}:\ \mathbb{R}_+^{n+1}\rightarrow \cm$ by $$g_{m}(x,\,t)=a_{j} \ \ \mathrm{with} \ \  j=\inf\lf\{i:  E_{i}^{m}\ni(x,\,t)\r\}.$$
Then $\|u(x,\,t)-g_{m}(x,\,t)\|<1/m$ for all $(x,\,t)\in \mathbb{R}_+^{n+1}$. That is, $u$ can be uniformly approximated by countably valued function. For any compact subset $K\subseteq \mathbb{R}_+^{n+1}$ and $\epsilon>0$, there exist simple functions $g_{m,K,\epsilon}:\ K\rightarrow \cm$ and measurable subset $E_{K,\epsilon}\subseteq K$ such that
 $$
\lf|E_{K,\epsilon}\r|<\epsilon \ \ \mathrm{and} \ \ \lf\|u(x,\,t)-g_{m,K,\epsilon}(x,\,t)\r\|<1/m, \ \  \mathrm{for\ all} \ (x,\,t)\in K\setminus E_{K,\epsilon}.
$$
 Let $\{\epsilon_{j}\}_{j}$ be a sequence decreasing to $0$, define $g_{m,K}:\ K\rightarrow \cm$ by
\begin{eqnarray*}
g_{m, K}(x,\,t):=
\lf\{ \begin{array}{ll}
&g_{m,K,\epsilon_{j}}(x,\,t), \ \  \mathrm{if} \ j:=\inf\{i: (x,\,t)\in K\setminus E_{K,\epsilon_{i}}\}\leq m, \\
&\ \ \\
&\ \ \ \ 0 \ \ \ \ \ \ \ \ \ \ \ \ \  \, \mathrm{if} \ j:=\text{inf}\{i: (x,\,t)\in K\setminus E_{K,\epsilon_{i}}\}> m.
\end{array}\r.
\end{eqnarray*}
Then $\{g_{m,K}\}_{m}$ are simple functions on $K$ and converges in norm to $u$ almost everywhere in $K$. Indeed, $$\lf|\bigcap_{j}E_{K,\epsilon_{j}}\r|\leq \lf|E_{K,\epsilon_{j}}\r|<\epsilon_{j},$$ That is,
$$
\lf|\bigcap_{j}E_{K,\epsilon_{j}}\r|=0.
$$
For all $x\in K\setminus \cap _{j}E_{K,\epsilon_{j}}$, $\{j\in\nn: x\in K\setminus E_{K,\epsilon_{j}}\}$ is not empty. For all $m\geq j:=\inf\{i: x\in K\setminus E_{K,\epsilon_{i}}\}$, $g_{m,K}(x)=g_{m, K,\epsilon_{j}}(x)$, so $g_{m, K}$ converges in norm to $u$. Let $\{K_{j}\}_{j}$ be an increasing sequence consisting of compact subsets of $\mathbb{R}_+^{n+1}$ such that $\cup_{j}K_{j}=\mathbb{R}_+^{n+1}$, define $u_{m}:\ \mathbb{R}_+^{n+1}\rightarrow \cm$ by
\begin{eqnarray*}
u_{m}(x,\,t):=
\lf\{ \begin{array}{ll}
&g_{m, K_{j}}(x,\,t), \ \  \mathrm{if} \ j:=\inf\{i: (x,\,t)\in K_{i}\}\leq m, \\
&\ \ \\
&\ \ \ \ 0 \ \ \ \ \ \ \ \ \ \ \ \ \ \mathrm{if} \ j:=\inf\{i: (x,\,t)\in K_{i}\}>m.
\end{array}\r.
\end{eqnarray*}
Then $\{u_{m}\}_{m}$ are simple functions on $\mathbb{R}_+^{n+1}$. For all $(x,\,t)\in \mathbb{R}_+^{n+1}$ except for a null set, when $m$ is big enough, we always have $u_{m}(x,\,t)=g_{m,K_{j}}(x,\,t)$, converging in norm to $u(x,\,t)$.

Finally, we show that, for all compact subsets $K\subseteq \mathbb{R}_+^{n+1}$,
$$
\int_{K}\|u(x,\,t)\|\,dxdt<+\infty.
$$
It suffices to show that $\|u(\cdot,\,\cdot)\|$ is bounded on $K$. Notice the fact that $\{u(x,\,t)\}_{(x,\,t)\in K}$ is a family of linear functionals on $\cm^{*}$ by $$u(x,\,t)(\varphi):=\varphi(u(x,\,t))$$ and that $\varphi(u)$ is continuous and thus bounded over $K$, then the boundedness of $u$ over $K$ follows from the Banach-Steinhaus theorem.
\end{proof}

Now we prove the main result of this section.
\begin{proof}[Proof of Theorem \ref{t4.1}]
We prove Theorem \ref{t4.1} in two steps.

\textbf{Step 1.} In this step, we show the equivalence of (i), (ii) and (iii).
It is easy to obtain (i)$\Longrightarrow$ (ii) and (ii) $\Longrightarrow$ (iii). Therefore, to complete the proof of \textbf{Step 1}, we only need to prove that (ii) $\Longrightarrow$ (i) and (iii) $\Longrightarrow$ (ii).

(ii)$\Longrightarrow$ (i): Let $u$ be a weakly harmonic function. Now we show that $u$ is also strongly harmonic. For any $\overline{x_0}:=(x_{0},\,t_0)\in \mathbb{R}_+^{n+1}$, there exists a closed ball $B:=\overline{B(\overline{x_0},\ r)}\subseteq \mathbb{R}_+^{n+1}$. Since $u$ is weakly harmonic, it follows from Lemma \ref{l4.1} that $u$ is Bochner integrable over $\partial B$,  and thus, for all $\overline{x}:=(x,\,t)\in \mathbb{R}_+^{n+1}$ with $|\overline{x}-\overline{x_{0}}|<r$, $P(\overline{x},\,\zeta):=\frac{r^{2}-|\overline{x}-\overline{x_{0}}|^{2}}{|\overline{x}-\zeta|^{n+1}}u(\zeta)$ is also a Bochner integrable function of $\zeta$ over $\partial B$. Therefore, for all $\varphi\in \cm^*$, by the Hille theorem (see Lemma \ref{l2.tt}), we have
$$\varphi\big(\int_{\partial B}\frac{r^{2}-|\overline{x}-\overline{x_{0}}|^{2}}{|\overline{x}-\zeta|^{n+1}}u(\zeta)
\frac{d\mu(\zeta)}{\mu(\partial B)}\big)=\int_{\partial B}\frac{r^{2}-|\overline{x}-\overline{x_{0}}|^{2}}{|\overline{x}-\zeta|^{n+1}}
\varphi(u(\zeta))\frac{d\mu(\zeta)}{\mu(\partial B)}.$$
Since $\varphi(u)$ is harmonic, by the Poisson formula of $\varphi(u(\overline{x}))$, we have
$$\varphi(u(\overline{x}))=\varphi\big(\int_{\partial B}\frac{r^{2}-|\overline{x}-\overline{x_{0}}|^{2}}{|\overline{x}-\zeta|^{n+1}}u(\zeta)
\frac{d\mu(\zeta)}{\mu(\partial B)}\big)$$
for all $\varphi\in \cm^*$. By the Hahn-Banach theorem,  this yields that
$$u(\overline{x})=\int_{\partial B}\frac{r^{2}-|\overline{x}-\overline{x_{0}}|^{2}}{|\overline{x}-\zeta|^{n+1}}
u(\zeta)\frac{d\mu(\zeta)}{\mu(\partial B)}.$$
It is obvious that the function $P(\overline{x},\ \zeta)=\frac{r^{2}-|\overline{x}-\overline{x_{0}}|^{2}}{|\overline{x}-\zeta|^{n+1}}$ can be written as a series $\sum_{\alpha}c_{\alpha}(\zeta)(\overline{x}-\overline{x_{0}})^{\alpha}$, which converges uniformly in some neighborhood of $\overline{x_{0}}$.
Interchanging the integral and the sum, we get
$$u(\overline{x})=\sum_{\alpha}\lf(\int_{\partial B}c_{\alpha}(\zeta)u(\zeta)\frac{d\mu(\zeta)}{\mu(\partial B)}\r)(\overline{x}-\overline{x_{0}})^{\alpha}.$$
This holds when $\overline{x_{0}}$ varies arbitrarily in $\mathbb{R}_+^{n+1}$ and $\overline{x}$ keeps close enough to $\overline{x_{0}}$, so $u\in C^{2}(\mathbb{R}_+^{n+1},\ \cm)$. Furthermore,
$$\varphi(\Delta (u))(\overline{x})=\sum_{\alpha}\varphi(\int_{\partial B}c_{\alpha}(\zeta)u(\zeta)\frac{d\mu(\zeta)}{\mu(\partial B)})\Delta(\overline{x}-\overline{x_{0}})^{\alpha}=\Delta \varphi (u)(\overline{x})=0,$$
for all $\varphi\in \cm^*$. By the Hahn-Banach theorem, we have $\Delta u=0$.
Thus $u$ is strongly harmonic.

(iii)$\Longrightarrow$(ii): Let $u$ be a weak-$^*$ harmonic function. Now we need to show that, for any $\varphi\in \cm^*$,  $\varphi(u(\cdot)):\mathbb{R}_+^{n+1}\rightarrow\mathbb{C}$ is harmonic. It is easy to see that there exists $\{\varphi_{j}\}_{j}\subseteq \cm_*$ such that $\varphi_{j}(b)$ converges to $\varphi(b)$ for all $b\in \cm$. By the Banach-Steinhaus theorem, we know that $\{\varphi_{j}\}_{j}$ is uniformly bounded. As in Lemma \ref{l4.1}, we find that $u$ is also bounded over any compact subset $K\subseteq \mathbb{R}_+^{n+1}$. From the fact that $\varphi_{j}(u)$ is harmonic for any $j\in\nn$, and the Poisson formula, we deduce that, for any $\overline{x}\in\mathbb{R}_+^{n+1}$,
\begin{align}\label{e4.8}
\varphi_{j}(u(\overline{x}))=\int_{\partial B}P(\overline{x},\ \zeta)\varphi_{j}(u(\zeta))\frac{d\mu(\zeta)}{\mu(\partial B)}.
\end{align}
Notice that, for any $\overline{x},\,\overline{x}'\in\mathbb{R}_+^{n+1}$,
$$\lf|\varphi_{j}(u(\overline{x}))-\varphi_{j}(u(\overline{x}'))\r|\leq \int_{\partial B}\lf|P(\overline{x},\ \zeta)-P(\overline{x}',\ \zeta)\r||\varphi_{j}(u(\zeta))|\frac{d\mu(\zeta)}{\mu(\partial B)},$$
which implies the equicontinuity of the sequence of functions $\{\varphi_{j}(u)\}_{j}$. By the Ascoli theorem, we know that $\varphi_{j}(u)$ converges uniformly to $\varphi(u)$ on any compact subsets $K\subseteq \mathbb{R}_+^{n+1}$, as $j\rightarrow\fz$. Taking the limit in \eqref{e4.8} as $j\rightarrow\fz$,
we get the Poisson formula of $\varphi(u)$. That is, for any $\overline{x}\in\mathbb{R}_+^{n+1}$,
\begin{align*}
\varphi(u(\overline{x}))=\int_{\partial B}P(\overline{x},\ \zeta)\varphi(u(\zeta))\frac{d\mu(\zeta)}{\mu(\partial B)}.
\end{align*}
Therefore, this implies that $\varphi(u)$ is continuous and harmonic.

\textbf{Step 2.} In this step, we
 show that the equivalence of (iv), (v), (vi) (vii) and (viii). It is obvious that (i) implies (v), (vi), (vii) and (viii) and each of (v), (vi), (vii) and (viii) also  implies (iv) since the topology induced by the norm in $B(H)$ is finest, while the weak operator topology is the coarsest. To finish the proof of \textbf{Step 2}, it suffices to prove that (iv)$\Longrightarrow$(iii).

(iv)$\Longrightarrow$(iii): Let $\langle\xi,\, u(\cdot)\eta\rangle_{H}$ be a harmonic function on $\mathbb{R}_+^{n+1}$ for any $\xi,\,\eta\in H$. Next we only need to show that
for any $\{\xi_{j}\}_{j},\ \{\eta_{j}\}_{j}\subseteq H$ with $\sum_{j}\|\xi_{j}\|_{H}\|\eta_{j}\|_{H}<+\infty$,
$$\sum_{j\in\nn}\langle\xi_{j},u(\cdot)\eta_{j}\rangle_H$$ is a harmonic function on $\mathbb{R}_+^{n+1}$.

For any compact subset $K\subseteq \mathbb{R}_+^{n+1}$ and $\eta\in H$, $\{u(\overline{x})\eta\}_{\overline{x}\in K}$ is a family of bounded linear functionals on $H$. In fact, for all $\xi\in H$, $u(\overline{x})(\eta)(\xi)=\langle\xi, u(\overline{x})\eta\rangle_H$ is harmonic and thus bounded over $K$. It follows from the Banach-Steinhaus theorem that $\{u(\overline{x})\eta\}_{\overline{x}\in K}$ is bounded in $H$. A similar argument yields that $\{u(\overline{x})\}_{\overline{x}\in K}$ is bounded in $\cm$. For all $\{\xi_{j}\}_{j},\ \{\eta_{j}\}_{j}\subseteq H$ with $\sum_{j}\|\xi_{j}\|_{H}\|\eta_{j}\|_{H}<+\infty$ and all $J\in \mathbb{N}$, we have
$$\lf|\sum_{j\geq J}\lf\langle\xi_{j},u(\overline{x})\eta_{j}\r\rangle_H\r|\leq \sum_{j\geq J}\lf|\lf\langle\xi_{j},u(\overline{x})\eta_{j}\r\rangle_H\r|\leq \text{sup}_{\overline{x}\in K}\|u(\overline{x})\|\sum_{j\geq J}\|\xi_{j}\|_{H}\|\eta_{j}\|_{H}.$$
This means that $\sum_{j}\langle\xi_{j},u(\cdot)\eta_{j}\rangle_H$ converges uniformly on all compact subsets $K\subseteq \mathbb{R}_+^{n+1}$. Notice that $\sum_{j\leq J}\langle\xi_{j},u(\cdot)\eta_{j}\rangle_H$ is harmonic on $\mathbb{R}^{n+1}_+$ for all $J$, thus $$\sum_{j}\langle\xi_{j},u(\cdot)\eta_{j}\rangle_H$$ is also harmonic on $\mathbb{R}^{n+1}_+$. Notice that each $\varphi \in B(H)_{*}$ is of the form $\sum_{j}\xi_{j}\otimes \eta_{j}$ for some $\{\xi_{j}\}_{j},\ \{\eta_{j}\}_{j}\subseteq H$ with $\sum_{j}\|\xi_{j}\|_{H}\|\eta_{j}\|_{H}<+\infty$, therefore we complete the proof of Theorem \ref{t4.1}.
\end{proof}

By Theorem \ref{t4.1}, we are able to give the definition of operator-valued harmonic functions.
\begin{definition}\label{d2.5y}
Let $u:\rr_+^{n+1}\rightarrow \cm$ be an operator-valued function.
If $u$ satisfies any of Definitions \ref{d2.1} (i)--(viii), then we say that $u$ is an operator-valued harmonic function on $\rr_+^{n+1}$.
\end{definition}

The following conclusions follow from Lemma \ref{l2.tt}, Theorem \ref{t4.1} and its proof. We collect them here for further applications in the next section.

\begin{corollary}\label{c4.9}
Let $u:\rr_+^{n+1}\rightarrow \cm$ be an operator-valued harmonic function. Then
\begin{enumerate}
\item[\rm{(i)}]
for any $\overline{x}\in\rr_+^{n+1}$ and $r>0$, we have
$$u(\overline{x})=\frac{1}{|B(\overline{x},\,r)|}
\int_{B(\overline{x},\,r)}u(\overline{y})\,d
\overline{y};$$
\item[\rm{(ii)}]
for any $a\in \cm_*$ and $i\in[1,\,n]\cap\nn$, we have
$$\frac{\partial}{\partial x_i}\langle u,\,a\rangle
=\left\langle\frac{\partial}{\partial x_i} u,\,a\right\rangle \ \ \ \mathrm{and} \ \
\frac{\partial}{\partial t}\langle u,\,a\rangle
=\left\langle\frac{\partial}{\partial t} u,\,a\right\rangle.$$
\end{enumerate}
\end{corollary}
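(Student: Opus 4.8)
The plan is to derive both assertions from the equivalences already established in Theorem \ref{t4.1}, together with the Hille theorem (Lemma \ref{l2.tt}), rather than by any fresh analytic estimate. For part (i): by Theorem \ref{t4.1}, $u$ being operator-valued harmonic means in particular that $u$ is weakly harmonic, so $\varphi(u)$ is a (scalar) harmonic function on $\rr_+^{n+1}$ for every $\varphi\in\cm^*$. The classical mean value property then gives $\varphi(u(\overline x)) = \frac{1}{|B(\overline x,r)|}\int_{B(\overline x,r)}\varphi(u(\overline y))\,d\overline y$ for each such $\varphi$. Since $u$ is also strongly harmonic, it is norm-continuous, hence Bochner integrable over the closed ball $\overline{B(\overline x,r)}$ (alternatively invoke Lemma \ref{l4.1}); applying Lemma \ref{l2.tt} with the bounded — hence closed — operator $\varphi$ lets us pull $\varphi$ out of the Bochner integral, so $\varphi\bigl(\frac{1}{|B|}\int_B u(\overline y)\,d\overline y\bigr) = \frac{1}{|B|}\int_B\varphi(u(\overline y))\,d\overline y = \varphi(u(\overline x))$ for all $\varphi\in\cm^*$. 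The Hahn--Banach theorem (separation of points by $\cm^*$) then yields the claimed identity in $\cm$. This is essentially the $(ii)\Rightarrow(i)$ argument inside the proof of Theorem \ref{t4.1}, specialized to balls, which is why the corollary is phrased as following ``from its proof.''

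For part (ii), fix $a\in\cm_*$ and $i\in[1,n]\cap\nn$. By Theorem \ref{t4.1} again, $u$ is strongly harmonic, hence lies in $C^2(\rr_+^{n+1},\cm)$, so the partial derivative $\frac{\partial}{\partial x_i}u$ exists as a norm limit of difference quotients $h^{-1}\bigl(u(\overline x + h e_i) - u(\overline x)\bigr)$. The functional $b\mapsto\langle b,a\rangle$ is bounded on $\cm$, hence norm-continuous, so it commutes with this norm limit:
\begin{align*}
\frac{\partial}{\partial x_i}\langle u(\overline x),a\rangle
= \lim_{h\to 0}\Bigl\langle \frac{u(\overline x+he_i)-u(\overline x)}{h},a\Bigr\rangle
= \Bigl\langle \frac{\partial}{\partial x_i}u(\overline x),a\Bigr\rangle,
\end{align*}
and identically for $\partial/\partial t$. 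One should note that the pairing $\langle\cdot,a\rangle$ with $a\in\cm_*$ is in particular a (weak-$^*$ continuous) element of $\cm^*$, so the same computation is legitimate; there is nothing to prove beyond continuity of a bounded linear functional through a norm-convergent limit.

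I do not expect a genuine obstacle here: the content is entirely in Theorem \ref{t4.1}, and what remains is bookkeeping — checking that the relevant functionals are norm-continuous (immediate, since they are bounded), that $u$ is Bochner integrable on the balls in question (immediate from norm-continuity, or from Lemma \ref{l4.1}), and that Lemma \ref{l2.tt} applies with $T=\varphi$ (a bounded operator is closed). The only point deserving a word of care is that one must invoke the \emph{norm} differentiability of $u$ from strong harmonicity before differentiating the scalar pairings in part (ii); doing it the other way around — trying to build $\frac{\partial}{\partial x_i}u$ in $\cm$ from the scalar derivatives — would require an extra uniform-boundedness argument that is unnecessary once Theorem \ref{t4.1} is in hand.
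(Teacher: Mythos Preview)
Your proposal is correct and follows essentially the same route as the paper: for (i) you pair with $\varphi\in\cm^*$, use the scalar mean value property, and pull $\varphi$ through the Bochner integral via Lemma~\ref{l2.tt} (with Bochner integrability coming from Lemma~\ref{l4.1} or norm-continuity), then conclude by Hahn--Banach; for (ii) you use strong harmonicity to get norm convergence of the difference quotients and pass the bounded functional $\langle\cdot,a\rangle$ through the limit. This matches the paper's proof line for line, and your closing remark about invoking norm differentiability \emph{before} differentiating the scalar pairings is exactly the point.
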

\begin{proof}

(i) Since $u$ is weakly harmonic, it follows from Lemma \ref{l4.1} that $u$ is Bochner integrable over $B(\overline{x},\,r)\subset\rr_+^{n+1}$, $\overline{x}\in\rr_+^{n+1}$ and $r>0$.
Therefore, for all $\varphi\in \cm^*$, Lemma \ref{l2.tt} and the harmonicity of $\varphi(u)$ imply
$$\varphi(u(\overline{x}))=\frac{1}{|B(\overline{x},\,r)|}
\int_{B(\overline{x},\,r)}\varphi(u(\overline{y}))\,d
\overline{y}=\varphi\lf(\frac{1}{|B(\overline{x},\,r)|}
\int_{B(\overline{x},\,r)}u(\overline{y})\,d
\overline{y}\r).$$
Therefore, from the arbitrariness of $\varphi\in\cm^*$, we conclude the desired identity: for any $\overline{x}\in\rr_+^{n+1}$ and $r>0$,
$$u(\overline{x})=\frac{1}{|B(\overline{x},\,r)|}
\int_{B(\overline{x},\,r)}u(\overline{y})\,d
\overline{y}.$$

(ii) Since $u$ is strongly harmonic, $\frac{u(\overline{x}+he_i)-u(\overline{x})}{h}$ converges to $\frac{\partial}{\partial x_i} u(\overline{x})$ in $\cm$ as $h\rightarrow0$, where $e_i\in\rr^{n}$ is $i$-th unit coordinate vector. Thus for any $a\in \cm_{*}$ and $i\in[1,\,n]\cap\nn$, we have
\begin{align*}
\frac{\partial}{\partial x_i}\langle u(\overline{x}),\,a\rangle
=&\lim_{h\rightarrow0}\frac{\langle u(\overline{x}+he_i),\,a\rangle-\langle u(\overline{x}),\,a\rangle}{h}\\
=&\lim_{h\rightarrow0}\lf\langle \frac{u(\overline{x}+he_i)-u(\overline{x})}{h},\,a\r\rangle\\
=&\lf\langle\frac{\partial}{\partial x_i} u(\overline{x}),\,a\r\rangle.
\end{align*}
The identity involving $\frac{\partial}{\partial t}$ is dealt with in the same way.
\end{proof}

%%%%%%%%%%%%%%%%%%%%%%%%%%%%%%%%%%%%%%%%%%%%%%%%%%%%%%%%%%%%%%%%%%%%%%
\section{The main result and its proof \label{s3}}
\bigskip
In this section, we will prove the main result, that is, the characterization of operator-valued harmonic functions in terms of non-commutative $\mathcal{BMO}$ spaces, see Theorem \ref{t3.1} below.

Firstly, let us recall the definition of non-commutative $L_p$-spaces. Let
$\cm$ be a von Neumann algebra equipped with a normal semifinite faithful trace $\tau$.
Let $S_{\cm}^+$ be the set of all positive $x\in\cm$ such that $\tau(s(x))<\infty,$
where $s(x)$ denotes the support of $x$, that is, the least projection $e\in\cm$ such that $exe=x.$
Let $S_\cm$ be the linear span of $S_{\cm}^+$. For any $p\in(0,\,\infty)$, we define
$$\|x\|_{L_p(\mathcal M)}:=(\tau|x|^p)^{1/p}, \ \ \ x\in S_\cm,$$
where $|x|:=(x^*x)^{1/2}$. The usual non-commutative $L_p$-space, $L_p(\cm)$, associated with $(\cm,\,\tau)$, is
the completion of $(S_\cm, \|\cdot\|_{L_p(\mathcal M)})$. For $p=\infty$, we set $L_{\infty}(\cm)=\cm$ equipped
with the operator norm $\|\cdot\|$.

\smallskip

Next we introduce the spaces of harmonic functions. Let $C^{\mathrm{H}}(\rr_+^{n+1},\,\cm)$ denote the set of all the harmonic functions on $\rr_+^{n+1}$ with values in $\cm$.

\begin{definition}
The {column space of harmonic functions} is defined as
\begin{eqnarray*}
\mathcal{HMO}^{c}(\rr^{n+1}_+,\,\cm):=\lf\{u\in C^{\rm{H}}(\rr^{n+1}_+,\,\cm):\|u\|_{\mathcal{HMO}^{c}(\rr^{n+1}_+,\,\cm)}<\infty\r\},
\end{eqnarray*}
where
$$\|u\|_{\mathcal{HMO}^{c}(\rr^{n+1}_+,\,\cm)}:=\sup_{\mathrm{ball}\, B\subset\rn}\lf\|\lf(\frac{1}{|B|}
\int_{B}\int_{0}^{r_B}\lf|t\nabla u(x,\,t)\r|^2\,\frac{dxdt}{t}\r)^{1/2}\r\|$$
 with $r_B$ denoting the radius of ball $B$,
and
$$|\nabla u(\cdot,\,\cdot)|^2=\lf(\frac{\partial u}{\partial x_1}\r)^*\frac{\partial u}{\partial x_1}+\cdots+\lf(\frac{\partial u}{\partial x_n}\r)^*\frac{\partial u}{\partial x_n}+\lf(\frac{\partial u}{\partial t}\r)^*\frac{\partial u}{\partial t}.$$
%where the differentiability is in the w$^*$-sense.
Similarly, we define the {row space of harmonic functions} $\mathcal{HMO}^{r}(\rr^{n+1}_+,\,\cm)$ as the space of $u$ such that $u^{\ast}\in \mathcal{HMO}^{c}(\rr^{n+1}_+,\,\cm)$ with norm
$\|u\|_{\mathcal{HMO}^{r}(\rr^{n+1}_+,\,\cm)}
:=\|u^{\ast}\|_{\mathcal{HMO}^{c}(\rr^{n+1}_+,\,\cm)}$.
\end{definition}
\begin{remark}
When it comes back to the commutative setting, i.e.,
 $\cm:=\mathbb{C}$, the operator-valued harmonic function spaces $\mathcal{HMO}^c(\rr_+^{n+1},\,\cm)$ and $\mathcal{HMO}^r(\rr_+^{n+1},\,\cm)$
  are reduced to the classical (commutative) harmonic function space $\mathrm{HMO}(\rr_+^{n+1})$ studied by Fabes, Johnson and Neri \cite{fjn75}.
\end{remark}

Recall that the column operator-valued BMO space $\mathcal{BMO}^{c}(\rn,\,\cm)$ is defined as a subspace of $L_{\infty}(\cm;\,L^\mathrm{c}_2(\rn,\,\frac{dx}{1+|x|^{n+1}}))$ with
$$\|f\|_{\mathcal{BMO}^{c}(\rn,\,\cm)}:=\sup_{\mathrm{ball}\, B\subset\rn}\lf\|\lf[\frac{1}{|B|}
\int_{B}\lf|f(x)-\frac{1}{|B|}\int_Bf(y)\,dy\r|^2\,dx\r]^{1/2}\r\|.$$
Then {the operator-valued row BMO space} $\mathcal{BMO}^{r}(\rn,\,\cm)$ is defined as the space of $f$ such that $f^{\ast}\in \mathcal{BMO}^{c}(\rn,\,\cm)$ with norm
$\|f\|_{\mathcal{BMO}^{r}(\rn,\,\cm)}:=\|f^{\ast}\|_{\mathcal{BMO}^{c}(\rn,\,\cm)}$.  We refer to Mei's seminal work \cite{m07} for more properties of these spaces.

\smallskip

The main result of this subsection is as follows.
\begin{theorem}\label{t3.1}
We have the following conclusions:
\begin{enumerate}
\item[\rm{(i)}]Let $u\in\mathcal{HMO}^c(\rr_+^{n+1},\,\cm)$. Then there exists $f\in\mathcal{BMO}^c(\rn,\,\cm)$ such that, for any $(x,\,t)\in\rr_+^{n+1}$, $u(x,\,t)=f*P_t(x)$. Moreover, there exists a positive constant $C$ such that
    $$\|f\|_{\mathcal{BMO}^c(\rn,\,\cm)}\leq C\|u\|_{\mathcal{HMO}^c(\rr_+^{n+1},\,\cm)};$$
\item[\rm{(ii)}] Let $f\in\mathcal{BMO}^c(\rn,\,\cm)$. Then $u(x,\,t):=f*P_t(x)\in\mathcal{HMO}^c(\rr_+^{n+1},\,\cm)$, and there exists a positive constant $C$ such that
    $$\|u\|_{\mathcal{HMO}^c(\rr_+^{n+1},\,\cm)}
    \leq C\|f\|_{\mathcal{BMO}^c(\rn,\,\cm)}.$$
    \item[\rm{(iii)}] Similarly,  the two assertions above also hold true for the row spaces $\mathcal{HMO}^r(\rr_+^{n+1},\,\cm)$ and $\mathcal{BMO}^r(\rn,\,\cm)$.
\end{enumerate}
\end{theorem}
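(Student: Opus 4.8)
The plan is to reduce matters to the classical Fabes--Johnson--Neri theorem and the operator-valued machinery of Mei \cite{m07} by testing against a predual, using Theorem \ref{t4.1} and Corollary \ref{c4.9} to move between scalar harmonicity and operator-valued harmonicity freely. For part (ii), I would start from $f\in\mathcal{BMO}^c(\rn,\,\cm)$, set $u(x,\,t):=f*P_t(x)$, and observe that $u$ is operator-valued harmonic: for each $a\in\cm_*$, $\langle u,\,a\rangle = \langle f,\,a\rangle * P_t$ is the Poisson integral of the scalar BMO function $\langle f,\,a\rangle$, hence weak-$^*$ harmonic, and Theorem \ref{t4.1} upgrades this to strong harmonicity. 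The norm bound $\|u\|_{\mathcal{HMO}^c}\lesssim\|f\|_{\mathcal{BMO}^c}$ is essentially contained in Mei's work: the square function $\big(\frac{1}{|B|}\int_B\int_0^{r_B}|t\nabla u|^2\frac{dxdt}{t}\big)^{1/2}$ is dominated, via the usual splitting of $f$ into the part near $B$, the far part, and the constant $\frac{1}{|2B|}\int_{2B}f$, by standard Carleson/Poisson-kernel estimates that are insensitive to noncommutativity once one keeps all products in the correct (column) order; here one uses $|t\nabla u|^2=\sum_i(\partial_i u)^*\partial_i u + (\partial_t u)^*\partial_t u$ and Cauchy--Schwarz in $\cm$.

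The substantial direction is (i). Given $u\in\mathcal{HMO}^c(\rr_+^{n+1},\,\cm)$, I first want to produce the boundary function $f$. Fix any $a\in\cm_*$; then $\langle u,\,a\rangle$ is a scalar harmonic function on $\rr_+^{n+1}$, and by Corollary \ref{c4.9}(ii) one has $t\nabla\langle u,\,a\rangle = \langle t\nabla u,\,a\rangle$, so the scalar Carleson functional of $\langle u,\,a\rangle$ is controlled by $\|a\|\,\|u\|_{\mathcal{HMO}^c}$ (after bounding $|\langle t\nabla u,a\rangle|$ by $\|a\||t\nabla u|$ and using that the operator square function has finite norm). By the classical theorem of Fabes--Johnson--Neri \cite{fjn75}, $\langle u,\,a\rangle$ is the Poisson integral of a scalar $\mathrm{BMO}(\rn)$ function $f_a$, with $\|f_a\|_{\mathrm{BMO}}\lesssim\|a\|\,\|u\|_{\mathcal{HMO}^c}$. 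The map $a\mapsto f_a$ is linear and, modulo constants, bounded from $\cm_*$ into $\mathrm{BMO}(\rn)$; testing against $L_1$-functions $g$ on $\rn$ gives a bounded bilinear form, hence an element $f$ of the appropriate dual realizing $\langle u,\,a\rangle = \langle f*P_t,\,a\rangle$ for all $a$, i.e. $u(x,\,t)=f*P_t(x)$ by Hahn--Banach. The delicate point is then to show that this $f$ actually lies in $\mathcal{BMO}^c(\rn,\,\cm)$ with the right norm, not merely that all its scalar components are BMO; this does not follow from the scalar theorem alone because $\mathcal{BMO}^c$ is a genuinely operator/column object, not the intersection of scalar BMO spaces.

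To close this gap I would argue directly at the level of the column square function. Write $f_B:=\frac{1}{|B|}\int_B f$. One needs $\sup_B\big\|\big(\frac{1}{|B|}\int_B|f(x)-f_B|^2dx\big)^{1/2}\big\|\lesssim\|u\|_{\mathcal{HMO}^c}$. The standard route is the identity relating $f-f_B$ on $B$ to the harmonic extension: split $f=f_1+f_2+f_B'$ with $f_1=(f-f_B')\chi_{2B}$, and use that $(f_1*P_t)$ has an $L_2(\cm;L_2(\rn))$-type Carleson control coming from $\int_{2B}\int_0^\infty|t\nabla(f_1*P_t)|^2\frac{dxdt}{t}$, which by the $\mathcal{HMO}^c$ bound on $u$ (restricted to the Carleson box over $2B$, after subtracting the far contribution whose gradient is pointwise small by Poisson-kernel smoothness) is $\lesssim|B|\,\|u\|_{\mathcal{HMO}^c}^2$ as an operator inequality; this is exactly where the equivalence of harmonicity notions in Theorem \ref{t4.1} is used, since it legitimizes differentiating $u$ in $\cm$, applying the mean value property of Corollary \ref{c4.9}(i), and integrating by parts (Green's formula) against scalar test functions while keeping operator order. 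I expect this operator-valued Carleson-to-BMO estimate --- transporting the scalar Fabes--Johnson--Neri bound into a genuine $\mathcal{BMO}^c$ bound uniformly in $B$ --- to be the main obstacle, and it is the place where the noncommutative argument genuinely departs from \cite{fjn75}; everything else (part (iii) by taking adjoints, the density/limiting arguments, the verification that $u=f*P_t$) is routine once this is in hand.
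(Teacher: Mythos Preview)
Your route for (i) diverges from the paper's and has a genuine gap at the point you yourself flag as the main obstacle. You propose to apply scalar Fabes--Johnson--Neri to each $\langle u,a\rangle$ to obtain $f_a\in\mathrm{BMO}(\rn)$, assemble an $\cm$-valued $f$ from these via a bilinear-form argument, and then prove $f\in\mathcal{BMO}^c$ by the splitting $f=f_1+f_2+f_B'$ and a Littlewood--Paley/Carleson estimate on $f_1$. The assembly step is already loose (the $f_a$ are BMO classes modulo constants, and producing a well-defined $\cm$-valued function from a linear family of such classes is not an abstract duality fact). More seriously, your splitting argument is circular: to pass from the Carleson integral of $\nabla u=\nabla(f*P_t)$ over the box on $2B$ to that of $\nabla(f_1*P_t)$ you must subtract the far contribution $\nabla(f_2*P_t)$, and bounding this pointwise via Poisson-kernel smoothness together with Kadison--Schwarz produces an integral of $|f-f_B'|^2$ over annuli outside $2B$---precisely the $\mathcal{BMO}^c$-type quantity you have not yet controlled. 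Componentwise scalar BMO does not imply $\mathcal{BMO}^c$, and the splitting does not bootstrap you out of this.

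The paper sidesteps the problem by never invoking the scalar theorem or attempting to assemble $f$ abstractly. It works instead with the shifted slices $u(\cdot,1/k)$, which are honest $\cm$-valued functions. After a pointwise gradient bound $\|\nabla u(x,t)\|\lesssim t^{-1}\|u\|_{\mathcal{HMO}^c}$ (via Corollary \ref{c4.9}(i) and Lemma \ref{l3.0}) and the semigroup identity $u(x,t+1/k)=u(\cdot,1/k)*P_t(x)$ (Steps 1--3), it checks $\|u(\cdot,\cdot+1/k)\|_{\mathcal{HMO}^c}\lesssim\|u\|_{\mathcal{HMO}^c}$ uniformly in $k$ (Step 4). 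The decisive move (Step 5) is then to test against nice $h\in\mathcal{H}_1^c(\rn,\cm)$ and show $\bigl|\tau\int h\,u^*(\cdot,1/k)\bigr|\lesssim\|h\|_{\mathcal{H}_1^c}\|u\|_{\mathcal{HMO}^c}$ by the same Carleson-type pairing as in Mei's proof of the duality. By Lemma \ref{l3.1y} this gives $\|u(\cdot,1/k)\|_{\mathcal{BMO}^c}\lesssim\|u\|_{\mathcal{HMO}^c}$ uniformly in $k$, and Banach--Alaoglu furnishes a weak-$^*$ limit $g\in\mathcal{BMO}^c$ with $\nabla u=\nabla(g*P_t)$. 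Thus the $\mathcal{BMO}^c$ membership comes from the $\mathcal{H}_1^c$--$\mathcal{BMO}^c$ duality and weak-$^*$ compactness applied to the \emph{already $\cm$-valued} functions $u(\cdot,1/k)$, not from a direct column estimate on an abstractly assembled boundary value---this is the idea your proposal is missing.
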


This result is an operator-valued analogue of the beautiful one due to Fabes {\it et al.} \cite{fjn75}. To prove this result, we need the operator-valued Hardy spaces introduced by Mei \cite{m07}.
Let $P$ be the Poisson kernel of $\rn$:
$$P(y) = c_n
\frac{1}{(|y|^2 + 1)^{(n+1)/2}},$$
{where $c_n$ is the normalizing constant, that is,  $c_n:=\frac{\Gamma(\frac{n+1}{2})}{\pi^{\frac{n+1}{2}}}$.} For $t>0,$ let
$$P_t(y): = \frac{1}{t}
P\left(\frac{{y}}{{t}}\right) = c_n\frac{t}{(|y|^2+t^2)^{(n+1)/2}}.$$
We say that $f$ is an $S_{\cm}$-valued simple function on $\mathbb R^n$ if
 $$f=\sum_{j=1}^Jm_j\otimes\chi_{E_j},$$
where each $m_j\in S_{\cm}$ and $E_j$'s are disjoint measurable  subsets of $\mathbb R^n$ with $|E_j|<\infty$. In what follows, sometimes we call such a $S_{\mathcal M}$-valued simple function a nice function.
For any $S_{\cm}$-valued simple function $f$, its Poisson integral will be denoted by
$$f(y,\,t):=f*P_t(y):=\int_{\rn}
P_t(y-z)f(z)\,dz,\ \  (y,\,t                                             )\in\rr_+^{n+1},$$
and then the Lusin area functions of $f$ are defined by
$$S^c(f)(x):=\lf(\iint_{\Gamma}\lf|\nabla f(y+x,\,t)\r|^2\,dydt\r)^{1/2}\ \
\mathrm{and} \ \
S^r(f)(x):=S^c(f^*)(x),$$
where $$|\nabla f(\cdot,\,\cdot)|^2=\lf(\frac{\partial f}{\partial y_1}\r)^*\frac{\partial f}{\partial y_1}+\cdots+\lf(\frac{\partial f}{\partial y_n}\r)^*\frac{\partial f}{\partial y_n}+\lf(\frac{\partial f}{\partial t}\r)^*\frac{\partial f}{\partial t}$$
and
$\Gamma:=\{(y,\,t)\in\rr^{n+1}_+:|y|<t\}$.
We set
$$\|f\|_{\mathcal{H}_{1}^c(\rn,\,\cm)}:=
\lf\|S^c(f)\r\|_{L_1(L_{\infty}(\rn)\overline{\otimes}\cm)}$$
and
$$\|f\|_{\mathcal{H}_{1}^r(\rn,\,\cm)}:=
\lf\|S^r(f)\r\|_{L_1(L_{\infty}(\rn)\overline{\otimes}\cm)}.$$
The {column operator-valued Hardy space} $\mathcal{H}_{1}^c(\rn,\,\cm)$ (resp. {row operator-valued Hardy space} $\mathcal{H}_{1}^c(\rn,\,\cm)$) is defined to be the completion of the space of all $S_{\cm}$-valued simple functions with finite $\mathcal{H}_{1}^c(\rn,\,\cm)$ (resp. $\mathcal{H}^{r}_1(\rn,\,\cm)$) norm.

The following Fefferman-Stein duality is one of the main results in the above-mentioned paper.

\begin{lemma}\label{l3.1y}
We have
$$(\mathcal{H}_1^{c}(\rn,\,\cm))^*\backsimeq\mathcal{BMO}^c(\rn,\,\cm),$$
in the following sense:
\begin{enumerate}
\item[\rm{(i)}]
Each $g\in \mathcal{BMO}^c(\rn,\,\cm)$ defines a continuous linear
functional $\mathcal{L}_g$ on $\mathcal{H}^c_{1}(\rn,\,\cm)$ by
$$\mathcal{L}_g(f):=\tau\int_{\rn}f(x)g^*(x)\,dx,\ \ \ \mathrm{for}\ \mathrm{any} \  \mathrm{nice} \ f\in \mathcal{H}_{1}^c(\rn,\,\cm);$$
\item[\rm{(ii)}] If $\mathcal{L}\in(\mathcal{H}_{1}^c(\rn,\,\cm))^{*}$, then there exists some
$g\in \mathcal{BMO}^c(\rn,\,\cm)$ such that $\mathcal{L}=\mathcal{L}_g$ as the above.
\end{enumerate}
Moreover, there exists a positive constant $C$ such that
$$C^{-1}\|g\|_{\mathcal{BMO}^c(\rn,\,\cm)}
\leq\|\mathcal{L}_g\|_{(\mathcal{H}_{1}^c(\rn,\,\cm))^{*}}
\leq C\|g\|_{\mathcal{BMO}^c(\rn,\,\cm)}.$$
 Similar result holds for the row spaces.
\end{lemma}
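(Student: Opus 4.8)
The plan is to realize $\mathcal{H}_1^c(\rn,\,\cm)$ and $\mathcal{BMO}^c(\rn,\,\cm)$ as norm-equivalent copies of subspaces of operator-valued \emph{column tent spaces} over $\rr_+^{n+1}$ and to read the duality off from the tent-space duality. To a nice $f$ associate the $\cm$-valued function $\nabla(f*P_t)$ on $\rr_+^{n+1}$; by the very definition of $\|\cdot\|_{\mathcal{H}_1^c}$ the map $f\mapsto\nabla(f*P_t)$ is an isometry onto a (dense-domain) subspace of a column tent space $T_1^c$ whose density is exactly $S^c(f)$, and for a Poisson extension $u=g*P_t$ one has $\|\nabla u\|_{T_\infty^c}\sim\|g\|_{\mathcal{BMO}^c}$, where $T_\infty^c$ carries the Carleson-type norm $\sup_{\mathrm{ball}\,B}\|(\frac{1}{|B|}\int_B\int_0^{r_B}|\cdot|^2\,\frac{dx\,dt}{t})^{1/2}\|$ (the $\ls$ half of this equivalence is the elementary Fabes--Johnson--Neri-type computation that the Poisson gradient of a $\mathcal{BMO}^c$ function obeys a Carleson condition). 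Thus everything rests on three ingredients: (A) a polarized Littlewood--Paley reproducing identity, $\tau\int_{\rn}f(x)g^*(x)\,dx=\kappa\iint_{\rr_+^{n+1}}\tau\big(\nabla(f*P_t)(x)\,[\nabla(g*P_t)(x)]^*\big)\,t\,dx\,dt$ for nice $f,g$ (a dimensional constant $\kappa$; the boundary and $t\to\infty$ terms vanish because $f,g$ are nice); (B) the noncommutative tent H\"older (Carleson embedding) inequality $|\iint_{\rr_+^{n+1}}\tau(F\,G^*)\,w\,dx\,dt|\le C\|F\|_{T_1^c}\|G\|_{T_\infty^c}$; and (C) the tent-space duality $(T_1^c)^*\simeq T_\infty^c$. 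The row statement then follows by replacing every function by its adjoint.

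For part (i): fix $g\in\mathcal{BMO}^c$ and a nice $f$. Apply (A), then bound the half-space integral by (B) to get $|\mathcal L_g(f)|=|\tau\int fg^*|\le C\|\nabla(f*P_t)\|_{T_1^c}\|\nabla(g*P_t)\|_{T_\infty^c}\le C\|f\|_{\mathcal{H}_1^c}\|g\|_{\mathcal{BMO}^c}$. Since nice functions are dense in $\mathcal{H}_1^c(\rn,\,\cm)$, this shows that $\mathcal L_g$ extends to a bounded functional with $\|\mathcal L_g\|_{(\mathcal{H}_1^c)^*}\le C\|g\|_{\mathcal{BMO}^c}$.

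For part (ii): let $\mathcal L\in(\mathcal{H}_1^c(\rn,\,\cm))^*$. Transport $\mathcal L$ through the isometry $f\mapsto\nabla(f*P_t)$ to a functional on the subspace $\{\nabla(f*P_t)\}\subseteq T_1^c$, extend it by Hahn--Banach to all of $T_1^c$ with the same norm, and invoke (C) to obtain $G\in T_\infty^c$ with $\|G\|_{T_\infty^c}\le C\|\mathcal L\|$ and $\mathcal L(f)=\kappa\iint_{\rr_+^{n+1}}\tau\big(\nabla(f*P_t)(x)\,G(x,t)^*\big)\,t\,dx\,dt$ for every nice $f$. To produce the $\mathcal{BMO}^c$ function, integrate this identity backwards in $t$: for $0<\varepsilon<1$ let $g_\varepsilon$ be the function obtained by applying to $G$, truncated to $\varepsilon<t<1/\varepsilon$, the formal adjoint of $h\mapsto\nabla(h*P_t)$, so that $\tau\int f g_\varepsilon^*=\kappa\iint_{\varepsilon<t<1/\varepsilon}\tau(\nabla(f*P_t)\,G^*)\,t\,dx\,dt$ for nice $f$. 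A direct estimate shows $\sup_{0<\varepsilon<1}\|g_\varepsilon\|_{\mathcal{BMO}^c}\le C\|G\|_{T_\infty^c}$; hence along a subnet $g_\varepsilon\to g$ in the weak-$^*$ topology of $L_\infty(\cm;L^{\mathrm c}_2(\rn,\,\frac{dx}{1+|x|^{n+1}}))$, with $g\in\mathcal{BMO}^c$ and $\|g\|_{\mathcal{BMO}^c}\le C\|G\|_{T_\infty^c}\le C\|\mathcal L\|$; passing to the limit in the pairing gives $\mathcal L=\mathcal L_g$ on nice functions, hence on all of $\mathcal{H}_1^c$. The matching lower bound $\|g\|_{\mathcal{BMO}^c}\le C\|\mathcal L_g\|$ is then automatic from the open mapping theorem, since by (i)--(ii) the map $g\mapsto\mathcal L_g$ is a bounded surjection $\mathcal{BMO}^c\to(\mathcal{H}_1^c)^*$ and it is injective (if $\tau\int fg^*=0$ for all nice $f$ then $\int_E g=0$ for every finite-measure $E$, so $g=0$ a.e.); alternatively, if an explicit constant is desired, by the usual atom-testing argument, choosing for each ball $B$ a nice mean-zero function supported in $B$ with $\|\cdot\|_{\mathcal{H}_1^c}\le C|B|$ that nearly saturates the mean oscillation of $g$ over $B$.

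The main obstacle is entirely concentrated in ingredients (B) and (C) --- the noncommutative Carleson embedding inequality and the tent-space duality in the von Neumann algebra $L_\infty(\rn)\overline{\otimes}\cm$. The classical proofs of these facts proceed through pointwise stopping-time (good-$\lambda$) arguments that have no literal analogue here; instead one must run the argument through the operator-valued Hardy--Littlewood maximal operator and a Cuculescu-type projection decomposition of the $L_1$-norm of the area function, and throughout one must keep the product $F\,G^*$ in a \emph{fixed} order so that every Cauchy--Schwarz step is taken on the correct (``column'') side --- this asymmetry is precisely what forces the separate column and row theories. I would therefore isolate (B)--(C) as a standalone lemma; this is exactly the route of Mei's memoir \cite{m07}, to which the statement above defers, while the remaining steps reduce to formal bookkeeping over it.
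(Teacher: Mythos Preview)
The paper does not give a proof of this lemma at all: it is quoted as ``one of the main results'' of Mei's memoir \cite{m07} and used as a black box, so there is no argument in the paper to compare your proposal against. Your outline is a faithful sketch of the tent-space route that Mei's proof (and any proof) must take, and you correctly isolate the noncommutative Carleson embedding and tent-space duality as the entire substance, which is exactly why the present paper defers to \cite{m07} rather than reproving it.

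One small correction to your injectivity argument for the lower bound: the map $g\mapsto\mathcal L_g$ is \emph{not} injective on $\mathcal{BMO}^c(\rn,\cm)$ as a normed space, since $\|\cdot\|_{\mathcal{BMO}^c}$ is only a seminorm vanishing on constants. Your test with $f=m\otimes\chi_E$ has nonzero mean, so pairing it with a constant $g=c$ gives $|E|\,\tau(mc^*)\neq 0$ in general, which would contradict part (i) if taken literally; the pairing is to be read modulo constants in $g$ (equivalently, on nice $f$ with $\int f=0$, which are still dense in $\mathcal H_1^c$). With that adjustment your open-mapping argument goes through on $\mathcal{BMO}^c/\{\text{constants}\}$, and the inequality $\|g\|_{\mathcal{BMO}^c}\le C\|\mathcal L_g\|$ is unaffected because the seminorm already kills constants.
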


The following lemma is the Kadison-Schwarz inequality, which follows from the operator convexity of $t\rightarrow|t|^2$.
\begin{lemma}\label{l3.0}
 Let $(\Omega,\,d\mu)$  be a measure space.  Assume that $f:\Omega\rightarrow (0,\,\fz)$ and $g:\Omega\rightarrow\cm$ are functions such that all members of the below  inequality make sense. Then we have
\begin{align*}
\lf|\int_{\Omega}fg\,d\mu\r|^2\leq\int_{\Omega}f^2\,d\mu\int_{\Omega}|g|^2\,d\mu,
\end{align*}
where `$\leq$' is understood as the partial order in the positive cone of $\cm$.
\end{lemma}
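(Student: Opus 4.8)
The plan is to reduce the inequality to a single ``completing the square'' computation, which is the concrete incarnation of the operator convexity of $x\mapsto x^{*}x$ mentioned in the statement. Write $a:=\int_{\Omega}f^{2}\,d\mu$, $b:=\int_{\Omega}fg\,d\mu$ and $D:=\int_{\Omega}g^{*}g\,d\mu=\int_{\Omega}|g|^{2}\,d\mu$; the standing hypothesis that all members of the inequality make sense guarantees that these---and the mixed integrals $\int_{\Omega}fc^{*}g\,d\mu$, $\int_{\Omega}f^{2}c^{*}c\,d\mu$ used below---are finite, respectively norm-convergent Bochner integrals. If $\mu$ is the zero measure there is nothing to prove, and since $f>0$ everywhere we may otherwise assume $a\in(0,\,\fz)$.

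First I would isolate two soft facts. (1) For a Bochner integrable $h:\Omega\to\cm$ and a fixed $c\in\cm$, one has $\int_{\Omega}c^{*}h\,d\mu=c^{*}\int_{\Omega}h\,d\mu$ and $\int_{\Omega}hc\,d\mu=\big(\int_{\Omega}h\,d\mu\big)c$; this follows from Lemma \ref{l2.tt} applied to the bounded (hence closed) operators $x\mapsto c^{*}x$ and $x\mapsto xc$ on $\cm$. (2) If $h:\Omega\to\cm$ is Bochner integrable with $h(\omega)\ge0$ for a.e.\ $\omega$, then $\int_{\Omega}h\,d\mu\ge0$ in $\cm$: the positive cone of $\cm$ is norm-closed and convex, so it absorbs Bochner integrals of cone-valued functions---equivalently, pair with the positive functionals $x\mapsto\langle\xi,\,x\xi\rangle_{H}$, $\xi\in H$, and use that scalar integration preserves nonnegativity.

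Then I would set $c:=a^{-1}b\in\cm$ and apply (2) to $h:=fc-g$, so that $\int_{\Omega}|fc-g|^{2}\,d\mu\ge0$. Since $f$ is a real scalar, $(fc-g)^{*}=fc^{*}-g^{*}$, so $|fc-g|^{2}=f^{2}c^{*}c-fc^{*}g-fg^{*}c+g^{*}g$; integrating term by term and pulling $c,c^{*}$ outside the integrals by (1) yields
\[
0\le\int_{\Omega}|fc-g|^{2}\,d\mu=a\,c^{*}c-c^{*}b-b^{*}c+D=\frac{b^{*}b}{a}-\frac{b^{*}b}{a}-\frac{b^{*}b}{a}+D=D-\frac{b^{*}b}{a},
\]
because $c^{*}c=a^{-2}b^{*}b$ and $c^{*}b=b^{*}c=a^{-1}b^{*}b$. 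Multiplying through by $a>0$ gives $b^{*}b\le aD$, i.e.\ $\big|\int_{\Omega}fg\,d\mu\big|^{2}\le\int_{\Omega}f^{2}\,d\mu\int_{\Omega}|g|^{2}\,d\mu$, which is the assertion.

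I do not expect a serious obstacle: the content is just pointwise nonnegativity of the ``variance'' $|fc-g|^{2}$ combined with linearity of the Bochner integral in the fixed factors $c,c^{*}$. The only care needed is in justifying facts (1)--(2) and in checking that each integral invoked is Bochner integrable under the hypothesis. If one prefers to avoid operator-valued Bochner integrals altogether, an equivalent route is to prove the inequality first for $S_{\cm}$-valued step functions---where it becomes the finite-sum statement $\big|\sum_{i}f_{i}g_{i}\big|^{2}\le\big(\sum_{i}f_{i}^{2}\big)\big(\sum_{i}g_{i}^{*}g_{i}\big)$, which follows from $(f_{i}f_{j})_{i,j}\le\big(\sum_{i}f_{i}^{2}\big)\,\mathrm{Id}$ in $M_{n}(\ccc)$ conjugated by the column $(g_{1},\dots,g_{n})^{t}$ (using that $G\mapsto G^{*}(\cdot)G$ is order-preserving)---and then pass to the general case by approximation, invoking the closedness of the positive cone and continuity of the integral.
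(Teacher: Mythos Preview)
Your argument is correct: the completing-the-square computation with $c=a^{-1}b$ is exactly the standard proof of Cauchy--Schwarz, and your justifications (1)--(2) for pulling constants through Bochner integrals and for positivity of integrals of cone-valued functions are the right ingredients in the operator-valued setting.

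By way of comparison, the paper does not actually prove this lemma: it simply identifies the statement as the Kadison--Schwarz inequality and remarks that it follows from the operator convexity of $t\mapsto|t|^{2}$. Your proof is therefore more explicit than what the paper supplies. The two viewpoints are of course equivalent---your ``variance'' term $\int_{\Omega}|fc-g|^{2}\,d\mu\ge0$ is precisely the concrete unpacking of what operator convexity (or, more directly, the $2\times2$ positivity criterion) says in this situation---but your write-up has the advantage of being self-contained and of making transparent exactly which integrability and linearity facts are being invoked. The alternative route you sketch at the end (reduce to step functions via the matrix inequality $(f_if_j)_{i,j}\le(\sum_i f_i^2)\mathrm{Id}$ and approximate) is also fine and is closer in spirit to the paper's one-line appeal to operator convexity.
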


Now we are ready to show the main result of the present paper.

\begin{proof}[Proof of Theorem \ref{t3.1}] We only need to show Theorem \ref{t3.1} (i), (ii) was proved by Mei in \cite[Lemma 1.4]{m07}.
For the sake of clarity,
now we divide the proof of Theorem \ref{t3.1} (i) into five steps. Let $u\in\mathcal{HMO}^c(\rr_+^{n+1},\,\cm)$.

\textbf{Step 1:} In this step, we show that, for any $(x,\,t)\in\rr_+^{n+1}$ and $i\in[1,\,n]\cap\nn$,
\begin{align*}
\lf|\frac{\partial u}{\partial x_i}(x,\,t)\r|^2\lesssim\frac{1}{t^2}\|u\|^2_{\mathcal{HMO}^c(\rr_+^{n+1},\,\cm)} \ \
\mathrm{and} \ \
\lf|\frac{\partial u}{\partial t}(x,\,t)\r|^2\lesssim\frac{1}{t^2}\|u\|^2_{\mathcal{HMO}^c(\rr_+^{n+1},\,\cm)}.
\end{align*}
Indeed, let $(x_0,\,t_0)\in\rr_+^{n+1}$. By Corollary \ref{c4.9} (i) and Lemma \ref{l3.0}, we have
\begin{align*}
\lf|\frac{\partial u}{\partial x_i}(x_0,\,t_0)\r|^2
&=\lf|\frac{1}{\lf|B((x_0,\,t_0),\,\frac{t_0}{2})\r|}
\iint_{B((x_0,\,t_0),\,\frac{t_0}{2})}\frac{\partial u}{\partial x_i}(x,\,t)\,dxdt\r|^2\\
&\lesssim\frac{1}{t_0^{n+1}}
\iint_{B((x_0,\,t_0),\,\frac{t_0}{2})}\lf|\frac{\partial u}{\partial x_i}(x,\,t)\r|^2\,dydx\\
&\lesssim\frac{1}{t_0^{n+1}}
\int_{B(x_0,\,t_0)}\int_{\frac{t_0}{2}}^{\frac{3t_0}{2}}\lf|\frac{\partial u}{\partial x_i}(x,\,t)\r|^2\,dxdt\\
&\lesssim\frac{1}{t_0^{n+2}}
\int_{B(x_0,\,t_0)}\int_{0}^{2t_0}\lf|\frac{\partial u}{\partial x_i}(x,\,t)\r|^2t\,dxdt\\
&\lesssim\frac{1}{t_0^2}\|u\|_{\mathcal{HMO}^c(\rr_+^{n+1},\,\cm)}^2.
\end{align*}
If we replace $x_i$ by $t$, the above estimate also holds true.
Therefore, we finish the proof of \textbf{Step 1}.

\textbf{Step 2:} We prove that, for any $k\in\nn$, $x\in\rn$ and $t>0$,
$u(\cdot,\,\frac{1}{k})*P_t(x)\in\cm$.
To prove this, we only need to show that, for any $k\in\nn$, $x\in\rn$, $t>0$ and $a\in L_1(\cm)$,
\begin{align}\label{e3.1}
\lf|\lf\langle u(\cdot,\,\frac{1}{k})*P_t(x),\,a\r\rangle\r|\leq C_{(u,k,x,t)}\|a\|_{L_1(\cm)}<\fz.
\end{align}
Firstly, we claim that, for any $(x,\,t)\in\rr_+^{n+1}$,
\begin{align}\label{e3.2}
\lf|\lf\langle u(x,\,t),\,a\r\rangle-\lf\langle u(x_0,\,t),\,a\r\rangle\r|\lesssim
\lf\{
\begin{array}{ll}
 \|a\|_{L_1(\cm)}\|u\|_{\mathcal{HMO}^c(\rr_+^{n+1},\,\cm)}, & \  |x-x_0|\leq t, \\
  \|a\|_{L_1(\cm)}\|u\|_{\mathcal{HMO}^c(\rr_+^{n+1},\,\cm)}
  \log\frac{|x-x_0|}{t}, & \  |x-x_0|> t.
 \end{array}
\r.
\end{align}
In fact, if $|x-x_0|\leq t$, by the mean value theorem, Corollary \ref{c4.9} (ii) and \textbf{Step 1}, then there exists $\xi$ between $x$ and $x_0$ such that
\begin{align*}
\lf|\lf\langle u(x,\,t),\,a\r\rangle-\lf\langle u(x_0,\,t),\,a\r\rangle\r|
=&\lf|\nabla_x\lf\langle u(\xi,\,t),\,a\r\rangle\r||x-x_0|\\
\leq&\|a\|_{L_1(\cm)}\|\nabla_x u(\xi,\,t)\|_{\ell_2^{n}(\cm)}|x-x_0|\\
\lesssim&\|a\|_{L_1(\cm)}\| u\|_{\mathcal{HMO}^c(\rr_+^{n+1},\,\cm)}\frac{|x-x_0|}{t}\\
\lesssim&\|a\|_{L_1(\cm)}\| u\|_{\mathcal{HMO}^c(\rr_+^{n+1},\,\cm)},
\end{align*}
where $\nabla_x:=(\frac{\partial}{\partial x_1},\,\frac{\partial}{\partial x_2},\ldots,\frac{\partial}{\partial x_n})$.

If $|x-x_0|> t$, then we have
\begin{align*}
&\lf|\lf\langle u(x,\,t),\,a\r\rangle-\lf\langle u(x_0,\,t),\,a\r\rangle\r|\\
\leq&\lf|\lf\langle u(x,\,t),\,a\r\rangle-\lf\langle u(x,\,|x-x_0|),\,a\r\rangle\r|+
\lf|\lf\langle u(x,\,|x-x_0|),\,a\r\rangle-\lf\langle u(x_0,\,|x-x_0|),\,a\r\rangle\r|\\
&\ \ \ +\lf|\lf\langle u(x_0,\,|x-x_0|),\,a\r\rangle-\lf\langle u(x_0,\,t),\,a\r\rangle\r|\\
=&:\mathrm{I}_1+\mathrm{I}_2+\mathrm{I}_3.
\end{align*}
For the term $\mathrm{I}_1$, by Corollary \ref{c4.9} (ii) and \textbf{Step 1}, we obtain
\begin{align*}
\mathrm{I}_1=&\lf|\int_{t}^{|x-x_0|}\frac{\partial}{\partial s}
\langle u(x,\,s),\,a\rangle\,ds\r|\\
\leq&\int_{t}^{|x-x_0|}\lf|\frac{\partial}{\partial s}
\langle u(x,\,s),\,a\rangle\r|\,ds\\
\leq&\|a\|_{L_1(\cm)}\int_{t}^{|x-x_0|}\lf\|\frac{\partial u}{\partial s}
(x,\,s)\r\|\,ds\\
\lesssim&\|a\|_{L_1(\cm)}\|u\|_{\mathcal{HMO}^c(\rr_+^{n+1},\,\cm)}
\int_{y}^{|x-x_0|}\frac{1}{s}\,ds\\
\thicksim&\|a\|_{L_1(\cm)}\|u\|_{\mathcal{HMO}^c(\rr_+^{n+1},\,\cm)}
\log\lf(\frac{|x-x_0|}{t}\r).
\end{align*}
For the term $\mathrm{I}_2$, from the mean value theorem, the Fubini theorem, Corollary \ref{c4.9} (ii) and \textbf{Step 1}, we deduce that, there exists $\xi$ between $x$ and $x_0$ such that
\begin{align*}
\mathrm{I}_2=&\lf|\nabla_x\lf\langle u(\xi,\,|x-x_0|),\,a\r\rangle\r||x-x_0|\\
\leq&\lf\|\nabla_x u(\xi,\,|x-x_0|)\r\|\|a\|_{L_1(\cm)}|x-x_0|\\
=&\lf\||\nabla_x u(\xi,\,|x-x_0|)|^2\r\|^{1/2}\|a\|_{L_1(\cm)}|x-x_0|\\
\lesssim&\|a\|_{L_1(\cm)}\|u\|_{\mathcal{HMO}^c(\rr_+^{n+1},\,\cm)}.
\end{align*}
By a similar way in dealing with the estimate $\mathrm{I}_1$, we get that $$\mathrm{I}_3\lesssim\|a\|_{L_1(\cm)}\|u\|_{\mathcal{HMO}^c(\rr_+^{n+1},\,\cm)}
\log\lf(\frac{|x-x_0|}{t}\r),$$
which implies the claim \eqref{e3.2}.

Now we prove \eqref{e3.1}. By the Hille theorem (see Lemma \ref{l2.tt}), we obtain
\begin{align*}
\lf|\lf\langle u(\cdot,\,\frac{1}{k})*P_t(x),\,a\r\rangle\r|
=&\lf|\lf\langle u(\cdot,\,\frac{1}{k}),\,a\r\rangle*P_t(x)\r|\\
\leq&C\int_{\rn}\lf|\lf\langle u(z,\,\frac{1}{k}),\,a\r\rangle\r|
\frac{t}{(t^2+|x-z|^2)^{(n+1)/2}}\,dz\\
=&C\int_{|x-z|\leq\frac{1}{k}}\lf|\lf\langle u(z,\,\frac{1}{k}),\,a\r\rangle\r|
\frac{t}{(t^2+|x-z|^2)^{(n+1)/2}}\,dz
+C\int_{|x-z|>\frac{1}{k}}\cdots\\
=&:\mathrm{J}_1+\mathrm{J}_2.
\end{align*}
For the term $\mathrm{J}_1$, by \eqref{e3.2}, we have
\begin{align*}
\mathrm{J}_1&\leq\int_{|x-z|\leq\frac{1}{k}}\lf|\lf\langle u(z,\,\frac{1}{k}),\,a\r\rangle-\lf\langle u(x,\,\frac{1}{k}),\,a\r\rangle\r|
\frac{t}{(t^2+|x-z|^2)^{(n+1)/2}}\,dz\\
&\ \ \ +\int_{|x-z|\leq\frac{1}{k}}\lf|
\lf\langle u(x,\,\frac{1}{k}),\,a\r\rangle\r|
\frac{t}{(t^2+|x-z|^2)^{(n+1)/2}}\,dz\\
&\leq C\|a\|_{L_1(\cm)}\|u\|_{\mathcal{HMO}^c(\rr_+^{n+1},\,\cm)}
+C_{(x,\,k)}\|a\|_{L_1(\cm)}<\infty.
\end{align*}
For the term $\mathrm{J}_2$, from \eqref{e3.2}, we deduce that
\begin{align*}
\mathrm{J}_2&\leq\int_{|x-z|>\frac{1}{k}}\lf|\lf\langle u(z,\,\frac{1}{k}),\,a\r\rangle-\lf\langle u(x,\,\frac{1}{k}),\,a\r\rangle\r|
\frac{t}{(t^2+|x-z|^2)^{(n+1)/2}}\,dz\\
&\ \ \ +\int_{|x-z|>\frac{1}{k}}\lf|
\lf\langle u(x,\,\frac{1}{k}),\,a\r\rangle\r|
\frac{t}{(t^2+|x-z|^2)^{(n+1)/2}}\,dz\\
&\leq\|a\|_{L_1(\cm)}\|u\|_{\mathcal{HMO}^c(\rr_+^{n+1},\,\cm)}\int_{|x-z|>\frac{1}{k}}
\log(k|x-z|)
\frac{t}{(t^2+|x-z|^2)^{(n+1)/2}}\,dz\\
&\ \ \ +\int_{|x-z|>\frac{1}{k}}\lf|
\lf\langle u(x,\,\frac{1}{k}),\,a\r\rangle\r|
\frac{t}{(t^2+|x-z|^2)^{(n+1)/2}}\,dz\\
&\leq C\|a\|_{L_1(\cm)}\|u\|_{\mathcal{HMO}^c(\rr_+^{n+1},\,\cm)}\int_{|z|>\frac{1}{k}}
\log(k|z|)
\frac{t}{(t^2+|z|^2)^{(n+1)/2}}\,dz\\
&\ \ \ +\int_{|z|>\frac{1}{k}}\lf|
\lf\langle u(x,\,\frac{1}{k}),\,a\r\rangle\r|
\frac{t}{(t^2+|z|^2)^{(n+1)/2}}\,dz\\
&\lesssim C_{(t,\,k)}\|a\|_{L_1(\cm)}\|u\|_{\mathcal{HMO}^c(\rr_+^{n+1},\,\cm)}
+C_{(x,\,k)}\|a\|_{L_1(\cm)}<\infty.
\end{align*}
This finishes the proof of \textbf{Step 2}.

\textbf{Step 3:} In this step, we show that,
for any $k\in\nn$, $x\in\rn$ and $t>0$,
$$u(x,\,t+\frac{1}{k})=u(\cdot,\,\frac{1}{k})*P_t(x).$$
To prove this, we only need to show that, for any $a\in L_1(\cm)$,
$$\lf\langle \nabla u(x,\,t+\frac{1}{k}),\,a\r\rangle=\lf\langle \nabla u(\cdot,\,\frac{1}{k})*P_t(x),\,a\r\rangle.$$
By Corollary \ref{c4.9} (ii) and \textbf{Step 1}, we obtain that, for any $z\in\rn$ and $t>0$,
\begin{align*}
\lf|\nabla\lf\langle u(z,\,t),\,a\r\rangle\r|=&
\lf|\lf\langle \nabla u(z,\,t),\,a\r\rangle\r|\\
\leq&\lf\|\nabla u(z,\,t)\r\|_{\ell_2^{n+1}(\cm)}\|a\|_{L_1(\cm)}\\
\lesssim&\frac{1}{t}\|a\|_{L_1(\cm)}\|u\|_{\mathcal{HMO}^c(\rr_+^{n+1},\,\cm)}
\end{align*}
From this fact and applying the maximum principle for the scalar-valued harmonic function $\nabla\lf\langle  u(x,\,t+\frac{1}{k}),\,a\r\rangle$,
 we deduce that, for any $x\in\rn$, $t>0$ and $k\in\zz_+$, $\nabla\lf\langle  u(x,\,t+\frac{1}{k}),\,a\r\rangle
=\nabla\lf\langle  u(\cdot,\,\frac{1}{k}),\,a\r\rangle*P_t(x)$.
Therefore, it follows from Corollary \ref{c4.9} (ii) and the Hille theorem (see Lemma \ref{l2.tt}) that
\begin{align*}
\lf\langle \nabla u(x,\,t+\frac{1}{k}),\,a\r\rangle
&=\nabla\lf\langle  u(x,\,t+\frac{1}{k}),\,a\r\rangle\\
&=\nabla\lf\langle  u(\cdot,\,\frac{1}{k}),\,a\r\rangle*P_t(x)\\
&=\lf\langle \nabla u(\cdot,\,\frac{1}{k})*P_t(x),\,a\r\rangle.
\end{align*}
This completes the proof of \textbf{Step 3}.

\textbf{Step 4:} In this step, we prove that
\begin{align*}
\lf\|u(\cdot,\,\cdot+\frac{1}{k})\r\|_{\mathcal{HMO}^c(\rr_+^{n+1},\,\cm)}
\lesssim \|u\|_{\mathcal{HMO}^c(\rr_+^{n+1},\,\cm)}, \ \ \mathrm{for \ any} \ k\in\nn,
\end{align*}
where the constant is independent of $k$.
In fact, for any $x_0\in\rn$ and $r_0>0$. If $r_0>\frac{1}{k}$, then
 \begin{align*}
&\lf\|\frac{1}{|B(x_0,\,r_0)|}\int_{B(x_0,\,r_0)}\int_0^{r_0}\lf|\nabla
u(x,\,t+\frac{1}{k})\r|^2t\,dtdx\r\|\\
\lesssim&\lf\|\frac{1}{|B(x_0,\,2r_0)|}\int_{B(x_0,\,2r_0)}\int_0^{2r_0}\lf|\nabla
u(x,\,s)\r|^2s\,dsdx\r\|\\
\lesssim&\|u\|_{\mathcal{HMO}^c(\rr_+^{n+1},\,\cm)}^2.
\end{align*}
If $r_0\leq\frac{1}{k}$, by \textbf{Step 1}, we have
\begin{align*}
&\lf\|\frac{1}{|B(x_0,\,r_0)|}\int_{B(x_0,\,r_0)}\int_0^{r_0}\lf|\nabla
u(x,\,t+\frac{1}{k})\r|^2t\,dtdx\r\|\\
\lesssim&\|u\|_{\mathcal{HMO}^c(\rr_+^{n+1},\,\cm)}^2
\int_0^{r_0}\frac{t}{(t+\frac{1}{k})^2}
\,dt\\
\lesssim&\|u\|_{\mathcal{HMO}^c(\rr_+^{n+1},\,\cm)}^2.
\end{align*}

\textbf{Step 5:} In this step, we obtain that, for all $k\in\nn$,
\begin{align*}
\lf\|u(\cdot,\,\frac{1}{k})\r\|_{\mathcal{BMO}^c(\rn,\,\cm)}
\lesssim \|u\|_{\mathcal{HMO}^c(\rr_+^{n+1},\,\cm)}.
\end{align*}
To prove this, now we claim that, for any $S_{\cm}$-valued simple function $f\in\mathcal{H}_1^c(\rn,\,\cm)$,
\begin{align*}
\lf|\tau\int_{\rn}f(x)u^*(x,\,\frac{1}{k})\r|
\lesssim \|f\|_{\mathcal{H}_1^c(\rn,\,\cm)}\|u\|_{\mathcal{HMO}^c(\rr_+^{n+1},\,\cm)},
\ \mathrm{ \ uniformly\ for}\ k\in\nn.
\end{align*}
Combining \textbf{Step 4}, we know that this claim can be proved in a similar way as \cite[Theorem 2.4 (i)]{m07}. To control the length of the paper, we omit its details.
Therefore, by Lemma \ref{l3.1y}, we obtain that  $u(\cdot,\,\frac{1}{k})\in\mathcal{BMO}^c(\rn,\,\cm)$ and
$$\lf\|u(\cdot,\,\frac{1}{k})\r\|_{\mathcal{BMO}^c(\rn,\,\cm)}
=\sup_{\|f\|_{\mathcal{H}_1^c(\rn,\,\cm)}\leq1}\lf|\tau\int_{\rn}f(x)u^*(x,\,\frac{1}{k})\r|
\leq C\|u\|_{\mathcal{HMO}^c(\rr_+^{n+1},\,\cm)},$$
which implies that $\{u(\cdot,\,\frac{1}{k})\}_{k\in\nn}$ is an uniformly bounded sequence in $\mathcal{BMO}^c(\rn,\,\cm)$. From this and the Banach-Alaoglu theorem, we obtain that there exists a subsequence $\{u(\cdot,\,\frac{1}{k_j})\}_{j\in\nn}$ and some $g\in\mathcal{BMO}^c(\rn,\,\cm)$ such that
\begin{align*}
u(\cdot,\,\frac{1}{k_j})\rightarrow g,\ \ \mathrm{in\ the\ weak-^*\ sense}, \ \mathrm{as}\ j\rightarrow\infty
\end{align*}
and
\begin{align*}
\|g\|_{\mathcal{BMO}^c(\rn,\,\cm)}
\leq C\|u\|_{\mathcal{HMO}^c(\rr_+^{n+1},\,\cm)}.
\end{align*}
From \textbf{Step 3}, we know that, for any $x\in\rn$, $t>0$, $j\in\nn$ and $a\in L_1(\cm)$,
\begin{align}\label{e2.00c}
\nabla\lf\langle  u(x,\,t+\frac{1}{k_j}),\,a\r\rangle=\nabla \lf\langle u(\cdot,\,\frac{1}{k_j})*P_t(x),\,a\r\rangle.
\end{align}
Next we calculate the limit of both sides of the above equality.
Denote the left-hand side of \eqref{e2.00c} by $\mathrm{LHS}$, the right-hand side of \eqref{e2.00c} by $\mathrm{RHS}$.
By Corollary \ref{c4.9} (ii), it follows that, for any given $x\in\rn$ and $t>0$,
\begin{align*}
\lim_{j\rightarrow+\infty} \mathrm{LHS}=\lim_{j\rightarrow+\infty}\nabla\lf\langle  u(x,\,t+\frac{1}{k_j}),\,a\r\rangle=\nabla\lf\langle  u(x,\,t),\,a\r\rangle=\lf\langle\nabla u(x,\,t),\,a\r\rangle.
\end{align*}
Note that for any $x\in\rn,\,t>0$, each coordinate of $\nabla P_t(x-\cdot)a^*$ is an element of $\mathcal{H}^c_1(\rn,\,\cm)$. Then since $u(\cdot,\,\frac{1}{k_j})\rightarrow g$ converges to $\mathcal{BMO}^c(\rn,\,\cm)$ in the weak-$^*$ topology as $j\rightarrow+\infty$, we deduce that, for any given $x\in\rn$ and $t>0$,
\begin{align*}
\lim_{j\rightarrow+\infty}\mathrm{RHS}&=\lim_{j\rightarrow+\infty}\nabla \lf\langle u(\cdot,\,\frac{1}{k_j})*P_t(x),\,a\r\rangle\\
&=\lim_{j\rightarrow+\infty}\tau\lf(u(\cdot,\,\frac{1}{k_j})*\nabla P_t(x)a\r)\\
&=\lim_{j\rightarrow+\infty}\tau\int_{\rn}u(z,\,\frac{1}{k_j})\nabla P_t(x-z)a\,dz\\
&=\lim_{j\rightarrow+\infty}\overline{\tau\int_{\rn}\nabla P_t(x-z)a^*u^*(z,\,\frac{1}{k_j})\,dz}\\
&=\overline{\tau\int_{\rn}\nabla P_t(x-z)a^*g^*(z)\,dz}\\
&=\tau\int_{\rn}g(z)\nabla P_t(x-z)a\,dz
=\lf\langle\nabla g*P_t(x),\,a\r\rangle.
\end{align*}
Therefore, from the arbitrariness of $a\in L_1(\cm)$, we further conclude that, for any $(x,\,t)\in\rr_+^{n+1}$,
$$\nabla u(x,\,t)=\nabla g*P_t(x),$$ and so $u= g*P_t+C$, that is
$u(x,\,t)=g*P_t(x)$ as an element of $\mathcal{HMO}^c(\rr_+^{n+1},\,\cm)$.
This completes the proof of Theorem \ref{t3.1}.
\end{proof}

%%%%%%%%%%%%%%%%%%%%%%%%%%%%%%%%%%%%%%%%%%%%%%%%%%%%%%%%%%%%%%%%%%%%%%
%\textbf{Acknowledgements.} The authors would like to express their
%deep thanks to the referees for their very careful reading and useful
%comments which do improve the presentation of this paper.

\bigskip
\noindent

Cheng Chen

\medskip

\noindent
 Department of Mathematics\\
 Sun Yat-sen(Zhongshan) University\\
 Guangzhou, 510275, P.R. China\\
\noindent

\medskip
\noindent

\noindent{E-mail}:
\texttt{chench575@mail.sysu.edu.cn}

\medskip

Guixiang Hong

\medskip

\noindent
Institute for Advanced Study in Mathematics\\
 Harbin Institute of Technology\\
Harbin, 150001, P.R. China
\noindent

\medskip
\noindent
\noindent{E-mail }:
\texttt{gxhong@hit.edu.cn} \\

\medskip

Wenhua Wang

\medskip
\noindent
Institute for Advanced Study in Mathematics\\
 Harbin Institute of Technology \\
Harbin, 150001, P.R. China
\smallskip

\noindent{E-mail }:
\texttt{whwangmath@whu.edu.cn} \\
%\texttt{wuxf@cumtb.edu.cn} (Xinfeng Wu)\\
 \medskip

\noindent

\noindent

\medskip
\noindent

%School of Mathematics and Statistics\\
%Wuhan University\\
% Wuhan
%430072, Hubei, China

\smallskip

%\noindent{E-mail }:\\
%\texttt{} (Xinfeng Wu)\\
\bigskip \medskip

\end{document}